\newcommand{\dd}{\mathrm{d}}
\newcommand{\diag}{\mathrm{diag}}
\global\let\tikz@ensure@dollar@catcode=\relax
\setlist{
  listparindent=\parindent,
  parsep=0pt,
}
\numberwithin{equation}{section}
\theoremstyle{plain} 
\newtheorem{theorem}{Theorem}[section]
\newtheorem{Lemma}[theorem]{Lemma}
\newtheorem{Proposition}[theorem]{Proposition}
\newtheorem{definition}[theorem]{Definition}
\theoremstyle{definition} 
\newtheorem{example}[theorem]{Example}
\newtheorem{Remark}[theorem]{Remark}
 {
      \theoremstyle{plain}
      \newtheorem{Assumption}{Assumption}
  }
\newcommand\CorrespondingAuthor[1]{
  \begingroup
  \def\@makefnmark{}
  \footnotetext{Corresponding author: #1}
  \endgroup
}
\renewenvironment{abstract}{%
  \small%
  \begin{center}%
    \bfseries \abstractname\vspace{-.5em}\vspace{\z@}%
  \end{center}%
  \quote%
}
{
\endquote}
\newcommand{\CCs}[1]{{\bar{\bm{C}}^{{#1}}}}
\newcommand{\DDs}[1]{{\bar{\bm{D}}^{{#1}}}}
\definecolor{darkmagenta}{rgb}{0.5,0,0.5}
\definecolor{darkgreen}{rgb}{0,0.6,0}
\definecolor{darkblue}{rgb}{0,0,0.6}
\definecolor{darkred}{rgb}{0.8,0,0}
\definecolor{mellow}{rgb}{.847, 0.72, 0.525}
\begin{document}

\title{Duration-dependent stochastic fluid processes and solar energy revenue modeling}

\author{
Hamed Amini \thanks{Department of Industrial and Systems Engineering, University of Florida, Gainesville, FL, USA, email:  aminil@ufl.edu} \ \ \ 
Andreea Minca\thanks{Cornell University, School of Operations Research and Information Engineering, Ithaca, NY, 14850, USA, email: {\tt acm299@cornell.edu}}\ \ \ 
Oscar Peralta\thanks{Cornell University, School of Operations Research and Information Engineering, Ithaca, NY, 14850, USA, email: {\tt op65@cornell.edu}}
}

\maketitle

\begin{abstract}
We endow the classical stochastic fluid process with a duration-dependent Markovian arrival process (DMArP). We show that this provides a flexible model for the revenue of a solar energy generator. In particular, it allows for heavy-tailed interarrival times and for seasonality embedded into the state-space. It generalizes the calendar-time inhomogeneous stochastic fluid process. We provide descriptors of the first return of the revenue process. Our main contribution is based on the uniformization approach, by which we reduce the problem of computing the Laplace transform to the analysis of the process on a stochastic Poissonian grid. Since our process is duration dependent, our construction relies on translating duration form its natural grid to the Poissonian grid. We obtain the Laplace transfrom of the project value based on a novel concept of $n$-bridge and  provide an efficient algorithm for computing the duration-level density of the $n$-bridge. Other descriptors such as the Laplace transform of the ruin process are further provided.
\bigskip

\noindent {\bf Keywords:} Stochastic fluid process; solar energy revenue; first return probability matrix; Laplace transform of project value; ruin time. 

\end{abstract}

\section{Introduction}

With integration of renewable energy sources, the  analysis of the risk and  viability of these projects is critical yet challenging because of their stochastic nature.
We propose a  model for solar energy revenue analysis that  captures the impact of  environmental factors, as well as downtime costs on the long-term revenue generated by the project. Our baseline model is a  stochastic fluid processes (SFPs) endowed with  a novel class of duration-dependent arrival processes. SFPs have played an important role in performance evaluation and risk management in queueing systems \cite{latouche2018analysis}. They have been used to solve a variety of queueing problems, such as determining system response times, calculating waiting times and queue lengths, and analyzing the stability of systems under varying loads. SFPs provide a flexible and tractable framework to model the continuous accumulation of  revenue or rewards over time, with the rewards being positive or negative, depending on the state of some discrete state-space stochastic system. A key  feature is their piecewise linearity between the arrivals, leading to a tractable framework in which one can compute risk descriptors.  

There are a few existing studies, such as \cite{abdelrahman2017markov,deulkar2020sizing, hocaouglu2011stochastic, morf2014sunshine}, that have tackled the problem of modeling solar energy generation with Markov modulation. However, their models rely on time-homogeneous Markov processes, which may not accurately capture  duration-dependent dynamics  influenced by various factors, including weather patterns and down-time. Other recent models, see e.g.,~\cite{huang2021hybrid, kaba2018estimation, blaga2019current}, use machine learning algorithms to better capture complex patterns and relationships between various factors influencing solar energy production. 

We introduce Duration-dependent Markovian Arrival Processes (DMArPs), as an extension of the Markovian Arrival Process (MArP), an  object that is well studied in the queueing theory literature, starting with the seminal paper \cite{neuts1979versatile}. A DMArP is a specific kind of arrival process characterized by a finite state-space jump process with non-homogeneous jump intensities. These intensities restart at random
epochs of time, particularly at the arrival times associated with the DMArP itself. Our framework offers an alternative to the time-homogeneous models introduced in \cite{abdelrahman2017markov,deulkar2020sizing}, enabling  a more realistic representation of complex dynamics that might concurrently exhibit  heavy-tailed interarrival times and seasonality embedded within the state-space. 

\begin{figure}{r}
\captionsetup{justification=raggedright,singlelinecheck=false,labelfont=bf}
\centering
\includegraphics[width=.48\textwidth]{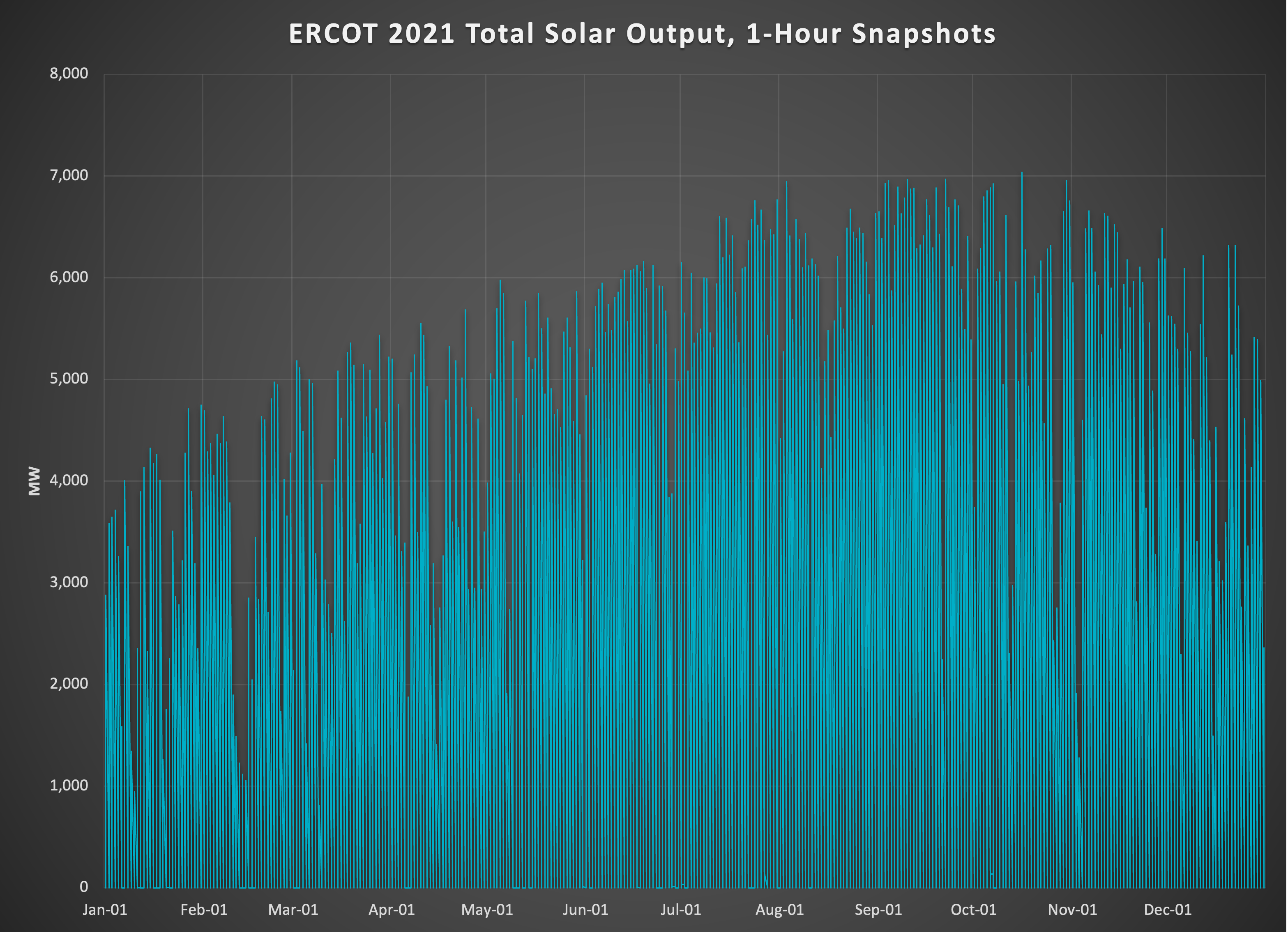}
\caption{Total solar output for the Electric Reliability Council of Texas (ERCOT) Source: \url{https://www.ercot.com/mp/data-products/data-product-details?id=PG7-126-M}}        
\label{fig:lowsolar}
\end{figure}

By using a DMArP, we can model  environmental effects such as  periods of cloudy weather or high temperatures, which  influence the amount of solar energy generated by the project. Some of these states may be extreme in the severity of their impact. 
Moreover, the duration-dependent mechanism in our DMArP allows us to capture situations where the solar energy project experiences downtime, such as maintenance or repairs. These can be directly  related to  environmental factors that cause equipment failures, or indirectly due to the complex interaction of energy demand and price patterns when the system is in a certain state. For example, a storm can trigger high demand coupled with low supply of solar energy and a failure of traditional energy generators due to freezing. 
This was the case of Winter Storm Uri in Texas in February 2021, in which the solar output dropped severely for several consecutive days, see Figure \ref{fig:lowsolar}. 
Simultaneously, numerous traditional generators, which represent the majority of capacity, also failed due to their inability to function at low temperatures, while load demands spiked. Prices, in turn, saw a $20$ fold increase in a matter of hours, leading to losses for an ensemble of generators that had precommitted production levels.
 The financial market plays a key role in the revenue structure and potential losses of these generators; when they produce below the committed level, they are forced to buy energy on the spot market, potentially after a shock in prices; see e.g.,~\cite{woo2011impact, weron2007modeling}. 

The mathematical finance literature on energy production and the associated financial processes remains scarce. In a notable exception, \cite{shrivats2022mean} consider a diffusive model. Their focus is on the optimal production commitment strategy of a generator in the presence of a mean field that captures the  strategies of a large number of other generators. Other recent studies have employed mean field models to address various challenges in the renewable energy sector. For instance, \cite{carmona2022mean, dumitrescu2022energy} explored the use of mean field models to regulate carbon emissions, while \cite{shrivats2022mean} applied these models to study equilibrium pricing in solar renewable energy certificate markets.

In this paper we investigate non-diffusive models, thereby providing an alternative modeling framework.
We do not focus on the exact revenue structure, but we start with a wide variety of given states in which the generator can find itself, as well as revenue rates in each of these states.
We emphasize the parallel to risk modeling in the traditional finance and insurance literature and we seek to  understand the risk of ruin and revenue descriptors for a generator.
Arrivals represent the moments when the system transitions into a state with a significantly negative revenue impact (potentially caused by  extreme weather conditions or equipment failure) or reverts to normal operations, such as through equipment repair or replacement, or the conclusion of adverse weather conditions. By incorporating duration-dependency in all arrival intensities, the model can  accurately capture the long-term behaviour of the project, providing insights into the profitability of the project over time. 

Our primary objective is to develop revenue descriptors and incorporate them into into the analysis of the viability of the solar energy project. We focus on determining a measure related to the depletion of revenue that can be used to compare different policies involving continuous dividend payments and costs associated with each arrival. To achieve this, we adapt the uniformization method for DMArPs and establish a new connection between this approach and the first return times in the context of SFPs. Using this adaptation, we derive exact integral-formulae for the first return descriptors of the associated SFP, which are novel in the literature, even when considering cyclic time-inhomogeneity or Markov-renewal stochastic fluid processes as discussed in previous studies \cite{margolius2016analysis,latouche2004markov}.
Additionally, the model can be used to assess revenue over different time intervals, offering valuable insights into the performance of the project under various policies and system parameters.

Calculating the first return descriptor matrix for the SFP in the solar energy revenue model is useful for several reasons. By analyzing the first return descriptors, we can determine which states are more likely to be revisited by the SFP over time, and therefore which states have a greater impact on the long-term revenue generated by the project. The first return descriptor matrix can also be used to calculate the ruin probability for the solar energy project. By analyzing the ruin probability, we can evaluate the financial risk associated with the project and make informed decisions about risk management strategies. By identifying the factors that contribute to financial loss, such as environmental factors or maintenance requirements, we can develop strategies to mitigate the risk of financial loss and maximize the profitability of the project over the long term.

The paper is structured as follows. 
In Section \ref{motivation} we motivate our stochastic fluid model endowed with duration-dependent arrivals as a potential model for the revenue of a solar energy generator.
The duration-dependent Markovian arrival process is introduced in Section \ref{sec:nonhomogene}.
The descriptors of the first return of the revenue process are provided in Section \ref{sec:exact1}. We analyze the fluid component on stochastic Poissonian grid and establish a new relevant object, the $n$-bridge.
The main inductive approach to obtaining closed form solutions for the $n$-bridge duration-level density is provided in Section~\ref{sec:bridgedensity}.
In Section \ref{sec:ruin} we derive via the Erlangization method a descriptor for the ruin time, namely the Laplace transform of the project value up to the ruin time. 
Section \ref{sec:conclude} gives future directions and concludes.

\section{Motivation and background}
\label{motivation}
Consider a firm investing in a solar energy project. The cumulative revenue generated by the project at time $t\ge 0$, denoted as $F(t)$, is influenced by various environmental factors like weather patterns, temperature, and cloud cover, which show seasonal and intermittent behavior. Our model utilizes a jump process $J=\{J(t)\}_{t\geq0}$ with state space $\mathcal{S}=\{1,\dots,p\}$ to capture these factors.

The jump process $J$ captures exogenous factors  such as weather conditions, solar irradiance levels or energy demand, which impact the firm revenue. Specifically, we model the relationship between the revenue $F=\{F(t)\}_{t\ge 0}$ and $J$ as 
\begin{align}\label{eq:CSFP1}
F(t)=F(0)+\int_0^t r(J(s))\dd s,\quad t\ge 0,
\end{align}
assuming that the firm's net revenue rate at time $t$ is represented by $r(J(t))$, where $r:\mathcal{S}\rightarrow\mathds{R}$ is the instantaneous net revenue rate function. The process $F$ as defined in (\ref{eq:CSFP1}) is known as a stochastic fluid process. In the classical case, see e.g.,  \cite{Rogers:1994uoa,asmussen1995stationary,Karandikar:1995vo}, the underlying jump process $J$ is assumed to be a time-homogeneous Markov process, resulting in the bivariate process $\left\{(F(t),J(t))\right\}_{t\geq 0}$ being a Markov additive process.

In our framework, we are interested in endowing the jump process $J$ with a counting process $N=\{N(t)\}_{t\ge 0}$ that evolves in an intertwined manner with $J$. The counting process keeps track of arrival epochs, which mark the beginning or ending of extreme weather events. We assume that the transition probabilities of the bivariate process $(J,N)$ depend on both the current state of $J$ and the time elapsed since the last arrival in $N$. Clearly, once an extreme event occurs, causing significant deviations in solar energy generation, like a winter storm, the time it takes to return to normal operations cannot be modeled as independent of the elapsed time.

This dependence motivates our duration dependent setup in the next section. Before introducing this extension, let us first  provide preliminaries on the classical time-homogeneous setup for the underlying process $(J,N)$, which reduces to a Markovian arrival process. The classical time-homogeneous Markov jump process $J$ has state space $\mathcal{S}=\{1,\dots,p\}$ and evolves according to an initial distribution $\bm{\alpha}=(\alpha_1,\dots,\alpha_p)$ and an intensity matrix $\bm{Q}=\{q_{ij}\}_{i,j\in \mathcal{S}}$. The matrix $\bm{Q}$ satisfies the conditions that $q_{ij}\ge 0$ for all $j\neq i$, $q_i=-q_{ii}\ge 0$, and $\sum_{i\neq j}q_{ij} = q_i$. The transition probabilities of $J$ after a small period of time $\Delta t>0$ are given by 
\[
\mathds{P}(J(t+\Delta t)=j \;|\; J(t)=i)
=
\left\{ 
\begin{array}{ccc}
q_{ij}\Delta t + o(\Delta t) & \mbox{for}& i\neq j,\, i,j\in\mathcal{S},\\
1-q_{i}\Delta t + o(\Delta t) & \mbox{for}& i\in\mathcal{S}.
\end{array}
\right.
\]

Here, $o(\Delta t)$ denotes an arbitrary real function $g$ such that $g(\Delta t)/\Delta t \rightarrow 0$ as $\Delta t \rightarrow 0$. Additionally, suppose that $\bm{Q}$ admits a decomposition $\bm{Q}=\bm{C} + \bm{D}$, where $\bm{C}=\{ c_{ij}\}_{i,j\in\mathcal{S}}$ and $\bm{D}=\{ d_{ij}\}_{i,j\in\mathcal{S}}$ are such that
\begin{itemize}
\item  $c_{ij}\ge 0$ for all $j\neq i$, $c_i:=-c_{ii}\ge 0$, $\sum_{i\neq j}c_{ij} + \sum_{j}d_{ij} = c_i$,
\item $d_{ij}\ge 0$ for all $i,j$.
\end{itemize}
Adjacent to $J$, the counting process $N=\{ N(t)\}_{t\ge 0}$ over $\mathds{N}_0:=\{ 0,1,2,\dots\}$, evolves interlacingly with $J$ according to the following conditions:
\begin{align*}
&\mathds{P}(N(t+\Delta t) = n, J(t+\Delta t)=j \;\mid\; N(t) = n, J(t)=i )  = \left\{ 
\begin{array}{ccc}
c_{ij}\Delta t + o(\Delta t) & \mbox{for}& i\neq j,\, i,j\in\mathcal{S},\\
1-c_{i}\Delta t + o(\Delta t) & \mbox{for}& i\in\mathcal{S},
\end{array}
\right.,
\end{align*}
\vspace{-0.5cm}
\begin{align*}
&\mathds{P}(N(t+\Delta t) = n+1, J(t+\Delta t)=j \;\mid\; N(t) = n, J(t)=i )  = d_{ij}\Delta t + o(\Delta t).
\end{align*}
The matrix $\bm{C}$ governs the transitions in $J$ that do not lead to a change in the counting process $N$, while $\bm{D}$ drives the transitions in $J$ that result in an arrival in $N$. This distinction enables us to differentiate between  exogenous factors that impact $N$ --called ``extreme"- and those that do not -- called ``regular". It also accounts for the variable durations between consecutive arrivals in $N$. 
While we do not need to impose a special structure on the matrices $\bm{C}$ and $\bm{D}$, it can be helpful to think of a particular example in which   the state space itself is partitioned into "regular" states and ``extreme" states. The net revenue rate could then have completely different values for regular versus extreme states.
The matrix $\bm{C}$ would then be the jumps in the Markov process $J$ between ``regular" states.
Conversely, the matrix $\bm{D}$ leads to transition to/from ``extreme" states. This allows us to capture the effect of weather patterns on the net revenue. Extreme states may lead to highly negative net revenue,  for example, due to equipment failure or large penalties resulting from unmet production levels.

The process $N$ is known as the Markovian arrival process (MArP) with parameters $(\bm{\alpha},\bm{C},\bm{D})$, and we call $J$ its underlying process. The class of MArPs was introduced in \cite{neuts1979versatile} as a method of modeling arrivals in systems that may not exhibit Poissonian behavior, while still maintaining a significant degree of tractability. In particular, it can be shown (see e.g., \cite[Section 10.2.2]{bladt2017matrix}) that the arrival times $S_1,S_2,\dots$ of $N$ are such that the multivariate density function $h_m$ of $(S_1,S_2-S_1,\dots, S_m-S_{m-1})$ takes the form:
\begin{align}
h_m(y_1,y_2,\dots,y_m) = \bm{\alpha} e^{\bm{C} y_1} \bm{D} e^{\bm{C}y_2 } \bm{D} \cdots e^{\bm{C} y_m} \bm{D}\bm{1},\quad y_1,y_2,\dots,y_m\ge 0,\label{eq:transMAP1}
\end{align}
where $e^{(\cdot)}:=\sum_{\ell = 0}^\infty (\cdot)^\ell/\ell!$ denotes the matrix-exponential function, and $\bm{1}$ denotes the column vector of ones of appropriate dimension. Due to the readily implementable matrix nature of their components, MArPs became central to the development of the field known as algorithmic probability, started by Marcel Neuts and collaborators (see e.g., \cite{neuts1994matrix,neuts1995algorithmic}), whose ultimate goal is to provide computationally tractable formulae to describe complex systems. 

In \cite{asmussen1993marked}, it was rigorously established that MArPs are dense within the class of counting processes in $\mathds{R}_+=[0,\infty)$. This means that any arrival behavior can be approximated arbitrarily well by a MArP, demonstrating the flexibility of this approach. In particular, MArPs are well-suited for modeling systems that exhibit regime-switching or dependence between interarrival times, which can be implemented through the underlying process $J$ (see e.g., \cite{artalejo2010markovian}). Below, we present some examples of counting processes that belong to the MArP class.

\begin{example}[Markov-modulated Poisson process]
  Suppose we want to construct a model for a system with multiple arrival regimes, where each regime exhibits Poissonian behavior with a distinct intensity. Specifically, suppose the system switches between regimes according to a Markovian process driven by an intensity matrix $\bm{\Lambda}$. During regime $i$, Poisson arrivals occur at a rate of $v_i\ge 0$. This model is known as a Markov-modulated Poisson process (MMPP) and can be incorporated into the MArP framework by selecting $\bm{D}=\mbox{diag}(v_1, \dots, v_p)$ and $\bm{C}=\bm{\Lambda}-\bm{D}$. MMPPs have been successfully applied in a wide range of queueing models, especially those that may exhibit bursty arrivals, such as telecommunications, manufacturing, and transportation systems (see e.g., \cite{fischer1993markov} and references therein).
\end{example}
\begin{example}[Renewal phase-type process]
Let $Z$ be the termination time of a Markov jump process driven by a subintensity matrix $\bm{T}$ and having initial distribution $\bm{\pi}$. The distribution of $Z$ is said to be phase-type with parameters $(\bm{\pi}, \bm{T})$, a class of distributions that was introduced in \cite{neuts1975probability} and has enjoyed considerable popularity in applied probability due to its tractability and flexibility. Renewal processes with phase-type interarrival times are particularly useful, as several closed-form descriptors are available. In fact, a renewal process with phase-type interarrival times $(\bm{\pi}, \bm{T})$ can be represented as a MArP with parameters $\bm{\alpha}=\bm{\pi}$, $\bm{C}=\bm{T}$ and $\bm{D}=(-\bm{T}\bm{1})\bm{\pi}$. With this choice of parameters, the multivariate density function of the interarrival times $S_1,S_2-S_1,\dots,S_m-S_{m-1}$ of a MArP in (\ref{eq:transMAP1}) takes the form
\begin{align*}
h_m(y_1,y_2,\dots,y_m) & = \bm{\pi} e^{ \bm{T} y_1} \big((-\bm{T}\bm{1})\bm{\pi}\big) e^{\bm{T} y_2} \big((-\bm{T}\bm{1})\bm{\pi}\big) \cdots e^{\bm{T}y_m}\big((-\bm{T}\bm{1})\bm{\pi}\big) \bm{1}\\
& =\left(\bm{\pi} e^{\bm{T} y_1} (-\bm{T}\bm{1})\right)\left(\bm{\pi} e^{\bm{T} y_2} (-\bm{T}\bm{1})\right)\cdots \left(\bm{\pi} e^{\bm{T} y_m } (-\bm{T}\bm{1})\right),
\end{align*}
which corresponds to the product of $m$ independent phase-type densities.
\end{example}

While MArPs have many advantages, they do have limitations when it comes to modeling heavy-tailed interarrival times. By (\ref{eq:transMAP1}), each marginal interarrival time of a MArP belongs to the class of phase-type distributions driven by the subintensity matrix $\bm{C}$. All of these distributions have lighter tails than the exponential distribution of parameter $\lambda_0\in (0, \delta_0)$, where $-\delta_0<0$ is the dominant eigenvalue of $\bm{C}$ \cite[Theorem 4.1.3]{bladt2017matrix}. Therefore, alternative approaches are required to model heavy-tailed interarrival times. In recent years, non-homogeneous methods have been explored to address these needs in the context of univariate and multivariate random variables. For example, \cite{albrecher2019inhomogeneous} proposes a new class of distributions that generalize phase-type distributions by considering the absorption time of a time-inhomogeneous Markov jump process. This idea has been extended to a multidimensional setting in \cite{albrecher2022fitting}. Further analysis in the context of risk management has been conducted in \cite{albrecher2022mortality,bladt2022phase,bladt2022phase2}, where the authors demonstrate the effectiveness of this time-inhomogeneous framework in modelling heavy-tailed phenomena while still maintaining a considerable degree of tractability. In the following section, we show how to integrate these features naturally into the bivariate process $(J,N)$.
 
\section{Duration-dependent Markovian arrival process (DMArP)}\label{sec:nonhomogene}

We define the duration process $U=\{U(s)\}_{s\ge 0}$ associated with a counting process $N$ with arrivals $0=S_0<S_1< S_2< \dots$ as
\[U(s):=s- S_{N(s)},\quad s\ge 0,\]
which represents the elapsed time since the last arrival of $N$. 
There is ample evidence of extreme behaviour for rainfall processes \cite{de2003generalized, salvadori2001generalized, rodriguez1987some}. For example, \cite{de2003generalized} point out that "it is well known that the rainfall process
often features an extreme behavior, which cannot be modeled within an Exponential-like statistical framework" and propose models with Pareto marginals for both the duration and intensity of rainfall process. Critically, duration and intensity are not independent.

To model extreme weather events with a severity that is revenue impacting  and whose duration is heavy tailed,  we consider a non-homogeneous version of the MArP that depends on the process $U$ as follows. Let $N$ have an underlying process $J$ with an initial probability $\bm{\alpha}$, and evolve according to the conditional probabilities
\begin{align}
&\mathds{P}(N(t+\Delta t) = n, J(t+\Delta t)=j \;\mid\; U(t)= u, N(t) = n, J(t)=i )\nonumber\\
&\hspace{6cm}  = \left\{ 
\begin{array}{ccc}
c_{ij}(u)\Delta t + o(\Delta t) & \mbox{for}& i\neq j,\, i,j\in\mathcal{S},\\
1-c_{i}(u)\Delta t + o(\Delta t) & \mbox{for}& i=j, \,i\in\mathcal{S},
\end{array}
\right.\label{eq:defMArPR1}
\end{align}
\vspace{-0.5cm}
\begin{align}
&\mathds{P}(N(t+\Delta t) = n+1, J(t+\Delta t)=j \;\mid\; U(t)= u, N(t) = n, J(t)=i )  = d_{ij}(u)\Delta t + o(\Delta t),\label{eq:defMArPR2}
\end{align}
for all $0\le u \le t$, $n\in\mathds{N}_0$, $i,j\in\mathcal{S}$. The matrices $\bm{C}(\cdot)=\{ c_{ij}(\cdot)\}_{i,j\in\mathcal{S}}$ and $\bm{D}(\cdot)=\{ d_{ij}(\cdot)\}_{i,j\in\mathcal{S}}$ satisfy the following properties:
\begin{itemize}
\item For $s\ge 0$, $c_{ij}(s)\ge 0$ for all $j\neq i$, $c_i(s):=-c_{ii}(s)\ge 0$, $\sum_{i\neq j}c_{ij}(s) + \sum_{j}d_{ij}(s) = c_i(s)$,
\item For $s\ge 0$, $d_{ij}(s)\ge 0$ for all $i,j$,
\item $\bm{C}(\cdot)$ and $\bm{D}(\cdot)$ are right-continuous with left limits.
\end{itemize}
To summarize, when $S_{N(t)}=t-u\ge 0$, $J$ will jump without an accompanying arrival of $N$ occurring in a small time interval after $t$ with intensity $\bm{C}(u)$, while a jump with an attached arrival of $N$ will occur with intensity $\bm{D}(u)$. 
Note that the impact on revenue is now dependent on the duration of the event, and the modeler can posit that transition probabilities to states with high losses increase with the duration of the event. This would amount, for example, to having $d_{ij}(s)$ increasing for  states of successive loss severity. Thereby, one can achieve interdependent duration $S_{n+1} - S_{n}$ and loss $F(S_{n+1}) - F(S_{n})$ with heavy-tailed marginals. 
Moreover, seasonality can also be embedded into the state space and extreme weather events can span multiple state transitions, in particular crossing two seasons. 

With this construction established, we can introduce the arrival process driving the revenue.

\begin{definition}
The process $N$ characterized by (\ref{eq:defMArPR1}) and (\ref{eq:defMArPR2}) is a duration-dependent Markovian arrival process (DMArP) with parameters $(\bm{\alpha},\{ \bm{C}(s)\}_{s\ge 0},\{ \bm{D}(s)\}_{s\ge 0})$, and we refer to $J$ as its underlying process.
\end{definition}

To ensure the existence of a DMArP with parameters $(\bm{\alpha},\{ \bm{C}(s)\}_{s\ge 0},\{ \bm{D}(s)\}_{s\ge 0})$, we initially present a straightforward construction founded on the uniform boundedness assumption described below. A more general construction without this assumption can be found in Appendix \ref{sec:altconst}.

\begin{Assumption}\label{ass:bounded1} There exists $\gamma\in (0,\infty)$ such that $\gamma\ge \sup_{v\ge 0, i\in \mathcal{S}} c_i(v)$.
\end{Assumption}

Under Assumption \ref{ass:bounded1}, we can construct the DMArP $N$ and its underlying process $J$ using uniformization arguments (see e.g., \cite{van2018uniformization}). Let $M=\{ M(t)\}_{t\ge 0}$ be a Poisson process with intensity $\gamma$. For all $s\ge 0$, define
\[\CCs{}(s)=\bm{I}+\frac{1}{\gamma}\bm{C}(s),\quad \DDs{}(s)=\frac{1}{\gamma}\bm{D}(s),\]
where $\bm{I}$ denotes the identity matrix of appropriate dimension. 
The Poisson process $M$ can be viewed as a convenient "stochastic time grid" that is fine enough to capture all events that affect the DMArP process, as shown in Figure~\ref{fig:IMJPR1} for a visual representation.

It can be easily verified that  $\CCs{}(s)=\{ \bar{c}_{ij}(s)\}_{i,j\in\mathcal{S}}$ and $\DDs{}(s)=\{ \bar{d}_{ij}(s)\}_{i,j\in\mathcal{S}}$ are non-negative, and $\CCs{}(s)+ \DDs{}(s)$ is a transition probability matrix. Denote the arrival times of $M$ by $T_0,T_1,T_2,\dots$ with $T_0:=0$. Then we let $N$ and $J$ evolve as follows:
\begin{enumerate}
    \item Let $J(0)\sim \bm{\alpha}$, $N(0)=0$ and $S_0=0$,
    \item For $n=0$, let $J(s)=i_{n}:=J(T_{n})$ and $N(s)=m_{n}:=N(T_{n})$ for all $s\in (T_{n}, T_{n+1})$,
    \item With probability $\bar{c}_{i_{n}, j}(T_{n+1}-S_{m_n})$ let $J(T_{n+1})=j$ and $N(T_{n})=m_n$, and with probability $\bar{d}_{i_{n}, j}(T_{n+1} -S_{m_n})$ let $J(T_{n+1})=j$, $N(T_{n})=m_n+1$ and $S_{m_n+1}=T_{n+1}$.
    \item Repeat steps 2. and 3. for $n=1,2,3,\dots$.
\end{enumerate}

\begin{figure}[h]
  \centering
  \includegraphics[scale=1.7]{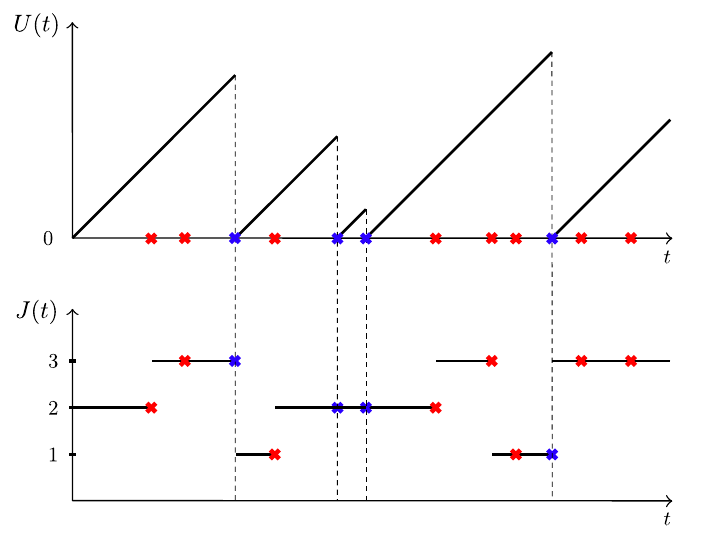}
\caption{(Top) The duration process $U$ is shown, with Poissonian grid $T_1<T_2<\dots$ marked with a cross. At each Poisson arrival, $U$ remains in place with a probability given by $\CCs{}(\cdot)$, event marked in red. A jump of $U$ to $0$ at a given Poisson arrival with a probability given by $\DDs{}(\cdot)$, event marked in blue. (Bottom) The jump process $J$ is shown, with jumps between states of $\mathcal{S}$ occurring at each $T_1,T_2,\dots$ according to the transition probability matrix $\CCs{}(\cdot) + \DDs{}(\cdot)$. It is important to note that at each Poisson arrival, there may or may not be an actual change of state in $\mathcal{S}$. \label{fig:IMJPR1}}
\end{figure}
Using the identity $U(t)=t-S_{N(t)}$ and the right-continuity of $\CCs{}(\cdot)$ and $\DDs{}(\cdot)$, we get
\begin{align*}
&\mathds{P}(N(t+\Delta t) = n, J(t+\Delta t)=j \;\mid\; U(t)= u, N(t) = n, J(t)=i )\\
& = \mathds{P}(M(t+\Delta t)\neq M(t),\, N(t+\Delta t) = n,\, J(t+\Delta t)=j \;\mid\; S_{n}= t-u,\, S_n\le t < S_{n+1},\, J(S_n)=i )\\
&\quad + \mathds{P}(M(t+\Delta t) = M(t),\, N(t+\Delta t) = n,\, J(t+\Delta t)=j \;\mid\; S_{n}= t-u,\, S_n\le t < S_{n+1},\, J(S_n)=i )\\
& = \left[\gamma (\Delta t)\right]\bar{c}_{ij}(u) + \left[1-\gamma(\Delta t)\right]\mathds{1}\{ i=j\} + o(\Delta t) =  \left\{ 
\begin{array}{ccc}
c_{ij}(u)\Delta t + o(\Delta t) & \mbox{for}& i\neq j,\\
1-c_{i}(u)\Delta t + o(\Delta t) & \mbox{for}& i=j,
\end{array}
\right.,\quad\mbox{and}
\end{align*}
\begin{align*}
&\mathds{P}(N(t+\Delta t) = n+1, J(t+\Delta t)=j \;\mid\; U(t)= u, N(t) = n, J(t)=i )\\
& = \mathds{P}(M(t+\Delta t)\neq M(t),\, N(t+\Delta t) = n+1, J(t+\Delta t)=j \;\mid\; S_{n}= t-u,\, S_n\le t < S_{n+1},\, J(S_n)=i )\\
& = \left[\gamma (\Delta t)\right]\bar{d}_{ij}(u) + o(\Delta t) = d_{ij}(u) + o(\Delta t),
\end{align*}
which yields the process $N$ as a DMArP following (\ref{eq:defMArPR1}) and (\ref{eq:defMArPR2}).

It turns out that  survival probabilities for each interarrival time can be expressed in terms of the so-called \emph{product integral}, commonly used to describe non-homogeneous Markovian systems. More generally, the product integral with respect to a measure-valued matrix $\bm{B}(\dd x)$ over the interval $(s,t]$ is defined by
\begin{equation}
\prod_{v\in (s,t]}(\bm{I}+\bm{B}(\dd v)): = \lim_{\max|v_k-v_{k-1}|\rightarrow 0} \prod_{k}\big(\bm{I}+\bm{B}((v_{k-1},v_k])\big),
\end{equation}
where $s=v_0<v_1<\dots<v_n=t$ is a partition of $(s,t]$. Alternatively, $\prod_{v\in (s,t]}(\bm{I}+\bm{B}(\dd v))$ can be defined \cite[Equation (16)]{gill1994lectures} as the unique solution to the integral equation
\begin{equation}\label{eq:volterra1}
\bm{Y}(s,t) = \bm{I} + \int_{(s,t]} \bm{B}(\dd v)\bm{Y}(v,t),\quad 0\le s<t.
\end{equation}
It can be shown \cite[Section 4.4]{gill1990survey} that if $\bm{B}(\dd x)=\bm{Q}(x)\dd x$ for some infinitesimal generator $\bm{Q}(\cdot)$ of a calendar-time non-homogeneous Markov jump process, then $\prod_{v\in (s,t]}(\bm{I}+\bm{B}(\dd v))$ corresponds to the transition probability matrix from time $s$ to $t$. In particular, if $\bm{Q}(\cdot)=\bm{Q}$ (i.e., the associated Markov process is time-homogeneous), then $\prod_{v\in (s,t]}(\bm{I}+\bm{Q}(v)\dd v)=e^{\bm{Q}(t-s)}$. To ease the exposition, below we present a survival probability identity corresponding to $S_1$ in terms of product integrals, with the case $S_{n+1}-S_n$, $n\ge 2$, following in a straightforward manner.
\begin{Lemma}\label{lem:auxprod3}
For $0\le s\le t$ define $\bm{G}(s,t)=\{ g_{ij}(s,t)\}_{i,j\in\mathcal{S}}$ where
\[g_{ij}(s,t)=\mathds{P}(S_1 > t, J(t)=j\,\mid\, S_1>s, J(s)=i).\]
Then,
\begin{equation}\label{eq:Gprodint1}\bm{G}(s,t) = \prod_{v\in (s,t]}(\bm{I}+\bm{C}(v)\dd v),\end{equation}
where the r.h.s. denotes the product integral associated to the measure $\bm{C}(v)\dd v$.
\end{Lemma}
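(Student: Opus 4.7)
The plan is to prove that $\bm{G}(s,t)$ satisfies the Volterra integral equation \eqref{eq:volterra1} with $\bm{B}(\dd v)=\bm{C}(v)\dd v$, so that \eqref{eq:Gprodint1} follows at once from the uniqueness of its solution. The starting observation is that, since $S_0=0$, on the event $\{S_1>v\}$ the duration process satisfies $U(v)=v$; consequently, on the conditioning event $\{S_1>s,\,J(s)=i\}$, over a small interval $[v,v+\Delta]\subset[s,S_1)$ the non-arrival transition intensities of $J$ are given by $\bm{C}(v)$ via \eqref{eq:defMArPR1}. In particular, $\bm{G}(s,t)$ depends only on the restriction of $\bm{C}(\cdot)$ to $[s,t]$ and not on $\bm{D}(\cdot)$, since all sample paths being accounted for have no arrival in $(s,t]$.

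Next I would use the Markovian character of $(J,N,U)$ — transparent from the uniformization construction of Section~\ref{sec:nonhomogene} — to derive a Chapman--Kolmogorov decomposition. For $0\le s\le s+\Delta\le t$, conditioning on $\{S_1>s+\Delta,\,J(s+\Delta)=k\}$ fixes the state of $(J,N,U)$ at time $s+\Delta$ to $(k,0,s+\Delta)$, which yields
\begin{equation*}
g_{ij}(s,t)=\sum_{k\in\mathcal{S}}g_{ik}(s,s+\Delta)\,g_{kj}(s+\Delta,t),\qquad\text{i.e.,}\qquad \bm{G}(s,t)=\bm{G}(s,s+\Delta)\,\bm{G}(s+\Delta,t).
\end{equation*}
Combining \eqref{eq:defMArPR1} with $U(s)=s$ and the right-continuity of $\bm{C}(\cdot)$ yields the short-time expansion $\bm{G}(s,s+\Delta)=\bm{I}+\bm{C}(s)\Delta+o(\Delta)$ as $\Delta\downarrow 0$.

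Plugging this expansion into the Chapman--Kolmogorov identity and rearranging gives
\begin{equation*}
\frac{\bm{G}(s,t)-\bm{G}(s+\Delta,t)}{\Delta}=\bm{C}(s)\,\bm{G}(s+\Delta,t)+o(1),
\end{equation*}
so letting $\Delta\downarrow 0$ produces the backward equation $-\partial_{s^{+}}\bm{G}(s,t)=\bm{C}(s)\,\bm{G}(s,t)$, with terminal condition $\bm{G}(t,t)=\bm{I}$. Integrating from $s$ to $t$ delivers
\begin{equation*}
\bm{G}(s,t)=\bm{I}+\int_{(s,t]}\bm{C}(v)\,\bm{G}(v,t)\,\dd v,
\end{equation*}
which is exactly \eqref{eq:volterra1} for $\bm{B}(\dd v)=\bm{C}(v)\dd v$; uniqueness of the Volterra solution then gives \eqref{eq:Gprodint1}. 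The main obstacle is justifying rigorously the Markov decomposition on the conditioning event: under Assumption~\ref{ass:bounded1} this is immediate from the Poissonian grid construction, as $(J(T_n),U(T_n))$ is a discrete-time Markov chain that can be sampled cleanly on $\{S_1>s+\Delta\}$; in the general setting of Appendix~\ref{sec:altconst}, the same argument goes through after verifying that $(J,N,U)$ admits a càdlàg strong Markov version, which follows from the right-continuity with left limits of $\bm{C}(\cdot)$ and $\bm{D}(\cdot)$.
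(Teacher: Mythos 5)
Your proposal is correct, but it takes a genuinely different route from the paper. You derive the Volterra equation $\bm{G}(s,t)=\bm{I}+\int_{(s,t]}\bm{C}(v)\bm{G}(v,t)\,\dd v$ by the classical Kolmogorov backward-equation argument: Chapman--Kolmogorov on the conditioning event, the short-time expansion $\bm{G}(s,s+\Delta)=\bm{I}+\bm{C}(s)\Delta+o(\Delta)$ drawn directly from \eqref{eq:defMArPR1} together with the observation that $U(v)=v$ on $\{S_1>v\}$, and then integration of the resulting right-derivative. The paper instead conditions on the first epoch $T_1\sim\mathrm{Exp}(\gamma)$ of the uniformization grid to obtain the renewal-type identity $\bm{G}(s,t)=e^{-\gamma(t-s)}\bm{I}+\int_s^t\gamma e^{-\gamma(v-s)}\bar{\bm{C}}(v)\bm{G}(v,t)\,\dd v$, recognizes $e^{\gamma(t-s)}\bm{G}(s,t)$ as the product integral of $\gamma\bar{\bm{C}}(v)\,\dd v$ via \eqref{eq:volterra1}, and then strips off the exponential factor with the generalized Trotter formula, using $\gamma(\bar{\bm{C}}(v)-\bm{I})=\bm{C}(v)$. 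The paper's route is tied to Assumption~\ref{ass:bounded1} but sidesteps all differentiability and uniformity issues, since the conditioning on $T_1$ is exact within the explicit Poissonian-grid construction; your route is more elementary and, as you note, in principle independent of the uniformization, but it requires two small justifications you only gesture at: (i) the $o(\Delta)$ in the short-time expansion must be uniform in $s$ on compacts (which holds under Assumption~\ref{ass:bounded1}, and in particular one must check that the probability of two or more jumps in $[s,s+\Delta]$ is uniformly $o(\Delta)$), and (ii) recovering the integral equation from a one-sided derivative when $\bm{C}(\cdot)$ is merely c\`adl\`ag requires knowing that $s\mapsto\bm{G}(s,t)$ is continuous (indeed Lipschitz under the boundedness assumption), so that a continuous function with a regulated right-derivative is the integral of that derivative. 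Neither point is an obstacle, but they should be stated to make the argument airtight.
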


The proof of Lemma~\ref{lem:auxprod3} is provided in Appendix~\ref{sec:lem:auxprod3}.

\begin{theorem}\label{th:multivariateprod1}
Suppose $J(0)\sim\bm{\alpha}=(\alpha_i: i\in\mathcal{S})$, $U(0)=0$, and let $y_1,y_2,\dots, y_n\ge 0$. Then, the multivariate density of ${S_1, S_2-S_1,\dots, S_n-S_{n-1}}$ is given by
\begin{equation}\label{eq:MAPlikedensity}
f_n(y_1, y_2,\dots, y_n)= \bm{\alpha} \bm{G}(y_1) \bm{D}(y_1)\bm{G}(y_2) \bm{D}(y_2) \cdots \bm{G}(y_n) \bm{D}(y_n)\bm{1},
\end{equation}
where 
\[\bm{G}(x):= \prod_{v\in (0,x]} (\bm{I} + \bm{C}(v)\dd v),\quad x\ge 0.\]
Particularly, (\ref{eq:transMAP1}) holds since $\bm{G}(x)=e^{\bm{C}x}$ whenever $\bm{C}(v)=\bm{C}$ for all $v\ge 0$.
\end{theorem}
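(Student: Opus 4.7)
The plan is to proceed by induction on $n$, exploiting the fact that $(J,U)$ is a Markov process (by construction via the uniformization grid $M$) that restarts at every arrival epoch of $N$: at $S_k$, the duration $U$ jumps to $0$ while $J$ retains the state attained just after the arrival. The base case is a direct combination of Lemma \ref{lem:auxprod3} with an infinitesimal ``arrival step'' governed by $\bm{D}$, and the induction step is then a strong-Markov factorization at $S_1$.

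For $n=1$, I would refine the density to account for the state at the arrival. By Lemma \ref{lem:auxprod3}, the event $\{S_1>y_1,\, J(y_1)=k\}$ conditional on $J(0)=i$, $U(0)=0$ has probability $[\bm{G}(y_1)]_{ik}$. Using (\ref{eq:defMArPR2}) with $u=y_1$, in the infinitesimal window $(y_1,y_1+\dd y_1]$ an arrival accompanied by a transition from $k$ to $j$ occurs with probability $d_{kj}(y_1)\dd y_1+o(\dd y_1)$. Summing over the intermediate state $k$ and averaging against $\bm{\alpha}$ yields, as a signed density on $\mathcal{S}$,
\[
\mathds{P}(S_1\in \dd y_1,\, J(S_1)=j)/\dd y_1 \;=\; [\bm{\alpha}\,\bm{G}(y_1)\,\bm{D}(y_1)]_j.
\]
Summing over $j$ gives $f_1(y_1)=\bm{\alpha}\,\bm{G}(y_1)\,\bm{D}(y_1)\,\bm{1}$, which is the $n=1$ case.

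For the induction step, assuming (\ref{eq:MAPlikedensity}) for $n-1$, I would invoke the strong Markov property of $(J,U)$ at $S_1$: conditional on $J(S_1)=j$, the shifted process $\{(J(S_1+t),\,N(S_1+t)-N(S_1))\}_{t\ge0}$ is a DMArP with parameters $(\bm{\delta}_j,\{\bm{C}(s)\}_{s\ge0},\{\bm{D}(s)\}_{s\ge0})$, independent of $\mathcal{F}_{S_1}$. Applying the inductive hypothesis with initial distribution $\bm{\delta}_j$, the joint density of $(S_2-S_1,\dots,S_n-S_{n-1})$ given $J(S_1)=j$ equals $\bm{\delta}_j\,\bm{G}(y_2)\bm{D}(y_2)\cdots \bm{G}(y_n)\bm{D}(y_n)\,\bm{1}$. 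Multiplying by $[\bm{\alpha}\,\bm{G}(y_1)\,\bm{D}(y_1)]_j$ from the base case and summing over $j\in\mathcal{S}$ collapses to the matrix product in (\ref{eq:MAPlikedensity}). The closing remark about (\ref{eq:transMAP1}) is immediate from the identity $\prod_{v\in(0,x]}(\bm{I}+\bm{C}\,\dd v)=e^{\bm{C}x}$ for constant $\bm{C}$, noted after (\ref{eq:volterra1}).

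I expect the main obstacle to be the rigorous justification of the strong Markov property at $S_1$ and the restart of $U$ at $0$, because the non-homogeneity of $\bm{C}(\cdot)$ and $\bm{D}(\cdot)$ is in the duration variable rather than calendar time. The cleanest route is to pass through the uniformization construction from Section \ref{sec:nonhomogene}: arrivals of $N$ are generated by Bernoulli-type trials with probabilities $\bar{c}_{ij}(T_{n+1}-S_{m_n})$ and $\bar{d}_{ij}(T_{n+1}-S_{m_n})$ at independent Poisson epochs of $M$, so conditional on the occurrence of an arrival at time $S_1$, the post-$S_1$ Poisson epochs of $M$ are independent of the past by the memorylessness of $M$, and the duration appearing in the Bernoulli weights is exactly $T_{n+1}-S_1$, i.e., $U$ restarts at $0$. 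Once this Markov-renewal identification is in place, the factorization in the induction step is immediate.
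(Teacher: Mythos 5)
Your proposal is correct and follows essentially the same route as the paper's proof: an induction that tracks the state of $J$ at each arrival epoch, combines Lemma~\ref{lem:auxprod3} with the infinitesimal arrival intensity $\bm{D}(\cdot)$ to produce each factor $\bm{G}(y_k)\bm{D}(y_k)$, and exploits the regeneration of $U$ at arrival times. The only cosmetic difference is that you peel off the first interarrival time (conditioning at $S_1$) while the paper peels off the last (conditioning on $\mathscr{F}_{S_{n-1}}$); the strong-Markov/restart justification you flag as the main obstacle is precisely what the paper uses implicitly in that conditioning step.
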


The proof of Theorem~\ref{th:multivariateprod1} is provided in Appendix~\ref{sec:th:multivariateprod1}.

\medskip

Only a few studies have explored  non-inhomogeneous versions of the Markovian arrival process. Notably, \cite{ledoux2008strong,angius2014approximate,bladt2020matrix} have pursued this topic. However, our approach to non-homogeneity differs from theirs. They consider processes in which jumps are controlled by $\bm{C}(t)$ and $\bm{D}(t)$ at time $t\ge 0$, independently of the time elapsed since the last arrival, a framework that exhibits \emph{calendar-time} non-homogeneity. In contrast, our framework exhibits \emph{interarrival} non-homogeneity, which allows in particular for heavy-tailed interarrival times. Examples \ref{ex:semiMarkov1} and \ref{ex:inhomogeneousPHrenewal} below provide two cases of the DMArP that cannot be replicated in either the calendar-time non-homogeneous MArP framework or the time-homogeneous MArP framework.

\begin{example}[Markov-renewal process]\label{ex:semiMarkov1}
It is well known \cite[Chapter 11]{ross2010introduction} that to simulate any random variable $Z$ with a bounded hazard rate function $h$, one can sample a Poisson process of intensity $\gamma'= \sup_{x\ge 0}\{ h(x)\}$ with arrival epochs $\{ T_n'\}_{n\ge 1}$, a sequence $\{ U_n\}_{n\ge 0}$ of i.i.d. $\mathrm{Unif}(0,1)$ random variables, and set $Z= T_{\sigma_*}'$ where
\[\sigma_*:=\inf\left\{ n\ge 1: U_n \le \frac{h(T_n')}{\gamma'}\right\}.\]
Thus, the restart epochs $\{ S_n\}_{n\ge 0}$ of a DMArP with $\bm{C}(v)=\diag\{ -h_i(v):i\in\mathcal{S}\}$ and $\bm{D}(v)=\{ q_{ij} h_i(v)\}_{i,j\in\mathcal{S}}$, where $\{ h_i\}_{i\in\mathcal{S}}$ is a collection of bounded hazard rate functions and $\bm{Q}=\{ q_{ij}\}_{i,j\in\mathcal{S}}$ is a probability matrix, correspond to the jump times of a \emph{Markov-renewal jump process} \cite{cinlar1969markov}. The latter extends the class of Markov jump processes by allowing for general state-dependent interarrival times, which have survival probability $\exp\left(\int_0^y h_i(s)\dd s\right)$ for $y\ge 0$.
\end{example}

\begin{example}[Time-inhomogeneous phase-type renewal process]  \label{ex:inhomogeneousPHrenewal} \emph{Time-inhomogeneous phase-type distributions} (IPH) were recently introduced in \cite{albrecher2019inhomogeneous} to provide a robust class of univariate distributions on $[0,\infty)$ suitable to model both light and heavy tailed phenomena. To define an IPH distribution, consider a terminating time-inhomogeneous Markov jump process $\left\{Y_t\right\}_{t\ge 0}$ on $\mathcal{S}$ with initial distribution $\bm{\alpha}$ and subintensity generator $\bm{C}(v)$ at time $v\ge 0$. Then the random variable $Z:=\inf\{ t\ge 0 : Y_t = \Delta\}$ is said to follow an IPH distribution of parameters $\big(\bm{\alpha}, \{ \bm{C}(v)\}_{v\ge 0}\big)$ with density function of the form
\[f_{Z}(y)=\bm{\alpha} \bm{G}(y)(-\bm{C}(y)\bm{1}), \quad y\ge 0.\] 
In particular, the arrivals of a DMArP with $\bm{D}(v) := (-\bm{C}(v)\bm{1})\bm{\alpha}$ correspond to the arrival times of a \emph{renewal process} with $\mathrm{IPH}\left(\bm{\alpha},\left\{\bm{C}(v)\right\}{v\ge 0}\right)$ interarrival times. This is verified by noting that, in this case, the multivariate density function of $S_1,S_2-S_1,\dots,S_n-S_{n-1}$ in (\ref{eq:MAPlikedensity}) takes the form
\begin{equation*}
f_n(y_1, y_2,\dots, y_n)= \prod_{\ell=1}^n (\bm{\alpha} \bm{G}(y_\ell) (-\bm{C}(y_\ell)\bm{1})), \quad y_1,\dots, y_n\ge 0.
\end{equation*}
\end{example}

Proposition \ref{prop:marginals1} below shows that, in general, the marginal distribution of each interarrival time $S_n-S_{n-1}$ is IPH-distributed.
\begin{Proposition}\label{prop:marginals1} The arrival times $S_0,S_1,S_2,\dots$ of the DMArP are such that
\begin{equation}\label{MAPlikemarginal}S_n-S_{n-1} \sim \mathrm{IPH}\left(\bm{\alpha}\left(\int_0^\infty \bm{G}(s) \bm{D}(s)\dd s\right)^{n-1}, \{ \bm{C}(v)\}_{v\ge 0}\right),\quad n\ge 1.\end{equation}
\end{Proposition}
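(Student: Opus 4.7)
The plan is to marginalize the joint density of interarrival times supplied by Theorem~\ref{th:multivariateprod1} and then identify the resulting expression with an IPH density via Example~\ref{ex:inhomogeneousPHrenewal}. The algebraic structure of (\ref{eq:MAPlikedensity}) is already half-way toward the target form, so the proof should be quite short.

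Starting from (\ref{eq:MAPlikedensity}), I integrate $y_1, \ldots, y_{n-1}$ each over $[0,\infty)$. Since every variable $y_k$ (for $k<n$) enters only through the matrix factor $\bm{G}(y_k)\bm{D}(y_k)$, a componentwise Fubini argument lets the multi-dimensional integral pass inside the matrix product, producing
\[
f_{S_n-S_{n-1}}(y) \;=\; \bm{\alpha}\, \bm{M}^{n-1}\, \bm{G}(y)\, \bm{D}(y)\, \bm{1}, \qquad \bm{M} := \int_0^\infty \bm{G}(s)\bm{D}(s)\,\dd s.
\]
Next, I invoke the identity $\bm{D}(s)\bm{1} = -\bm{C}(s)\bm{1}$, which is a direct consequence of the DMArP row-sum constraint $\sum_{j\neq i} c_{ij}(s) + \sum_j d_{ij}(s) = c_i(s) = -c_{ii}(s)$. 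Substituting yields
\[
f_{S_n-S_{n-1}}(y) \;=\; \bm{\alpha}_n\, \bm{G}(y)\, (-\bm{C}(y)\bm{1}), \qquad \bm{\alpha}_n := \bm{\alpha}\, \bm{M}^{n-1}.
\]
This matches exactly the IPH density formula recalled in Example~\ref{ex:inhomogeneousPHrenewal}, giving the claimed parameters.

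To complete the argument I must confirm that $\bm{\alpha}_n$ is a (sub-)probability vector so the IPH label is legitimate. By the construction of Section~\ref{sec:nonhomogene}, the entry $(\bm{M})_{ij}$ equals $\mathds{P}(S_1<\infty,\, J(S_1)=j \mid J(0)=i)$, so $\bm{M}$ is substochastic; hence every power $\bm{M}^{n-1}$ is substochastic and $\bm{\alpha}_n$ has nonnegative entries summing to at most $1$. Under Assumption~\ref{ass:bounded1}, arrivals occur almost surely in finite time, $\bm{M}$ is stochastic, and $\bm{\alpha}_n$ is a proper probability vector.

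The only delicate step is the interchange of the $(n-1)$-fold integration with matrix multiplication in the marginalization. This is legitimate because the integrand is nonnegative and $f_n$ is an honest density on $\mathds{R}_+^n$, so each componentwise Tonelli swap is automatically finite. Beyond that, the proof is essentially an exercise in reading off the resulting expression against the IPH template, so I do not anticipate any further obstacle.
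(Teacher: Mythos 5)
Your proof is correct and follows essentially the same route as the paper: marginalize the joint density from Theorem~\ref{th:multivariateprod1} over $y_1,\dots,y_{n-1}$, factor out the matrix $\bm{M}^{n-1}$, and use $\bm{D}(y)\bm{1}=-\bm{C}(y)\bm{1}$ to match the IPH density template. One minor caveat: your closing claim that Assumption~\ref{ass:bounded1} forces $\bm{M}$ to be stochastic is false (take $\bm{D}(\cdot)\equiv\bm{0}$, for instance); the paper's remark following the proposition explicitly allows the initial vector to be defective, so this aside should be dropped, though it does not affect the validity of the main argument.
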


The proof of Proposition~\ref{prop:marginals1} is provided in Appendix~\ref{sec:prop:marginals1}. We end this section with the following remark.

\begin{Remark}
Note that the initial vector $\bm{\alpha}\left(\int_0^\infty \bm{G}(s) \bm{D}(s)\dd s\right)^{n-1}$ of the IPH in Proposition \ref{prop:marginals1} may be defective for some $n\ge 1$. In this case, $S_\ell-S_{\ell-1}=\infty$ with positive probability for some $0\le \ell\le n$, meaning that there are at most $n$ extreme events.
\end{Remark}

\section{Descriptors of first return of the revenue process}\label{sec:exact1}
Equipped with the analysis of the driving DMArP, we now turn our attention to the revenue process.
We let $F$ be a SFP with revenue rate function $r:\mathcal{S}\rightarrow \mathds{R}$, driven by  a DMArP $N$, with parameters $(\bm{\alpha},\{ \bm{C}(s)\}_{s\ge 0},\{ \bm{D}(s)\}_{s\ge 0})$ and underlying jump process $J$. We are interested in  the behavior of the processes $F$, $N$, and $J$ up to the \emph{first return time} $\tau$, which is defined as the first time the revenue process falls below initial value
\[\tau  :=\inf\{ s> 0: F(s) < F(0)\}.\]

To facilitate this study, we define $\mathcal{S}^{\pm}$ as the set of states in $\mathcal{S}$ with a positive or negative net revenue rate, respectively, that is,
\begin{align*}
\mathcal{S}^{\pm} & := \{ i\in \mathcal{S} : (\pm 1)r(i) > 0\}.
\end{align*}

Note that if the image of $r$ is contained in $\mathds{R}\setminus\{0\}$, then $\mathcal{S}=\mathcal{S}^+\cup \mathcal{S}^-$. We make this simplifying assumption throughout our analysis, as it is common in the SFP literature. While the general case follows by using similar tools, more care is needed when $0$ rates are allowed (see e.g., \cite[Section 3.1]{asmussen1995stationary}).

In the time-homogeneous case, a common way to describe the behavior of both $F$ and $J$ is to use the first return probability matrix $\bm{\Psi}=\{\psi_{ij}\}_{i\in\mathcal{S}^+, j\in\mathcal{S}^-}$, which is defined as
\[\psi_{ij} := \mathds{P}(\tau<\infty, J(\tau)= j \mid J(0)=i).\]
This matrix can  serve as a performance measure for the system, capturing the occupation probabilities for the environment $J$ at the first return time.

In order to model the profitability of the project from the perspective of the firm in the DMArP-driven SFP framework, we will incorporate additional information about $J$ and $N$ into the descriptor or performance measure.
To achieve that, we consider that the revenue process continuously pays a state-dependent dividend rate. 
More precisely, we model the dividend process via a   mapping  $\sigma: \mathcal{S} \rightarrow \mathds{R}^+$, which satisfies the condition $\sigma(i)=0$ if $i \in \mathcal{S}^-$. By the latter condition, the project only pays dividend in the states with positive revenue rate. 

We also assume that the DMArP process $N$ imposes a cost structure. 
In our motivating example, we think of arrivals of $N$ as ``extreme" weather events. While the fluid process captures the revenue from energy production,  it is natural to assume that the project faces costs at each arrival of the extreme events. These may represent fixed costs associated with each downtime, such as repair costs. They come in addition to the running losses $r$ such as penalties due to unmet production levels.
 We  let $\bm{K} = \{k(i,j)\}_{i,j\in\mathcal{S}}$ be a nonnegative matrix that captures fixed costs, when the project jumps from state $i$ to a state $j$.

Note that in order to use the uniformization technique to compute first return probabilities and associated descriptors of profitability,  we will need to consider the delayed DMArP,  where we fix some $i_0\in\mathcal{S}$ as initial state $J(0)$, but assume that $U(0)=z$ for some $z\ge 0$. This instance occurs whenever an observer arrives at some point in time, and at that instant which is considered the observer's $0$ time, the system's duration process is at exactly $z$ units. In this context, the duration process is defined as
\[U(s):=\left\{\begin{array}{ccc}z+ s & \mbox{if} & 0\le s <S_1\\ s- S_{N(s)} & \mbox{if} & S_1\leq s\end{array}\right. \]
The construction of this delayed case is similar to the non-delayed case:
\begin{enumerate}
    \item Let $J(0)=i_0$, $N(0)=0$, $S_0=0$ and $U(0)=z$,
    \item For $n=0$, let $J(s)=i_{n}:=J(T_{n})$ and $N(s)=m_{n}:=N(T_{n})$ for all $s\in (T_{n}, T_{n+1})$,
    \item With probability $\bar{c}_{i_{n}, j}(T_{n+1}-S_{m_n}+U(S_{m_n}))$ let $J(T_{n+1})=j$ and $N(T_{n})=m_n$, and with probability $\bar{d}_{i_{n}, j}(T_{n+1} -S_{m_n} + U(S_{m_n}))$ let $J(T_{n+1})=j$, $N(T_{n})=m_n+1$ and $S_{m_n+1}=T_{n+1}$.
    \item Repeat steps 2. and 3. for $n=1,2,3,\dots$.
\end{enumerate}

Based on the delayed DMArP, we can capture the profitability of the project using the matrix $\bm{\Psi}^{(z)}(\theta_1,\theta_2)=\{\psi_{ij}^{(z)}(\theta_1,\theta_2)\}_{i\in\mathcal{S}^+, j\in\mathcal{S}^-}$ for $\theta_1,\theta_2, z\ge 0$.  This matrix represents the Laplace transform of the project value, with its elements defined as
\begin{align}
\label{eq:psitheta1}\psi_{ij}^{(z)}(\theta_1,\theta_2)= \mathds{E} \Big(& \exp\big(-\theta_1\int_{0}^\tau \sigma(J(s))\dd s-\theta_2\sum_{\ell: S_\ell<\tau} k(J(S_\ell) -, J(S_\ell))\big)\nonumber\\
&\mathds{1}\{\tau<\infty, J(\tau)= j\} \mid U(0)= z, J(0)=i\Big).
\end{align}
Note that for $z > 0$, the expectation in the r.h.s. of (\ref{eq:psitheta1}) takes into account a process $U$ that is delayed at the beginning by $z$ units of time.
Moreover, the structure is fairly general, and one can ensure that fixed costs are paid only once for each extreme event. For instance, if there is no change of state $J(S_{n})- = J(S_{n})$ then the cost can be set to zero, with large costs occurring when the project jumps from a ``regular" state to an ``extreme" state.
This can be easily extended to the case where the fixed costs occur with each transition of the environmental process, not only at the ``extreme" events. 

By an appropriate choice of dividend and cost functions, the matrix $\bm{\Psi}^{(z)}(\theta_1,\theta_2)$ can describe information about the time spent by $J$ in states with positive revenue rate, and the frequency of jumps that happen at the arrival times of $N$ up to $\tau$. This will add to our understanding of the projects' reliability.

The problem of computing the first return matrix $\bm{\Psi}$ for a time-homogeneous SFP is well-established. Solutions or algorithms, based on the fact that excursions of the level component $F$ are homogeneous with respect to time, have been proposed (see \cite{latouche2018analysis} for a comprehensive overview). Furthermore, extensions to semi-Markov and cyclic time-inhomogeneous stochastic fluid processes have been considered in \cite{latouche2004markov, margolius2016analysis}, which rely on inspecting the additive component at certain regeneration epochs for developing a level-crossing analysis. However, in the case of SFP driven by DMArPs, there are some limitations to this regenerative approach. While the natural regeneration epochs would coincide with the arrivals of the DMArP, note that if $S_1=+\infty$ with positive probability, this regeneration structure may not exist at all. Moreover, if $S_n<+\infty$ for all $n\ge 1$, that would rely on developing a theory for the excursions of the fluid process. 

Therefore, instead of pursuing a regenerative approach at the epochs $S_n$, we will base our analysis on a stochastic Poissonian grid. More specifically, under Assumption \ref{ass:bounded1}, we analyze the fluid component by observing it at the Poisson times $0=T_0<T_1<T_2<\dots$ introduced in the uniformization construction of the DMArP in Section~\ref{sec:nonhomogene}. This relies on the fact that $F$ is linear between $T_1<T_2<\dots$, and hence, $\{ \tau<\infty\}=\{ F(T_n)\le F(0)\mbox{ for some }n\ge 1\}$. In other words, we study the first return of $F$ by analyzing the downcrossing events of $\{ F(T_n)\}_{n\ge 0}$. This idea was first proposed in \cite{bean2019finite}, where the authors constructed an algorithm to compute finite-time return probabilities for time-homogeneous SFPs. In the reminder of this section, we present an algorithm inspired by the ideas in \cite{bean2019finite}, but tailored for the more complex inhomogeneous case. We provide the Laplace transform $\bm{\Psi}^{(z)}(\theta_1,\theta_2)$ for the project value of an SFP driven by a DMArP. Our approach differs from those presented in \cite{latouche2004markov, margolius2016analysis} for studying non-homogeneous SFPs, which rely on translating the duration from the natural grid of extreme events to the stochastic Poisson grid.  This enables us to utilize the full power of the Poissonian grid approach for solving complex excursion problems.

\subsection{Laplace transform of the project value} 
 \label{sec:Laplace}
Let us begin by assuming that $J(0)\in\mathcal{S}^+$. Recall that $0=T_0 < T_1 < T_2 <\cdots$ are Poissonian epochs of intensity $\gamma$, the latter being taken as in Assumption \ref{ass:bounded1}. These epochs include all the jumps and arrivals of $J$ and $N$, respectively. For $n\ge 2$, we define the event
\[\Omega_{n}=\left\{\max\left\{F(T_{0}), F(T_n)\right\}<\min\left\{F(T_1),\dots, F(T_{n-1})\right\}\right\}.\]
We can envision trajectories of $F$ in $\Omega_n$ as those with a ``bridge-like" structure within the time interval $[T_0, T_{n}]$, where the endpoints are at $T_0$ and $T_n$, and all intermediate points are strictly above both $F(T_0)$ and $F(T_n)$. For this reason, we call the paths that belong in $\Omega_n$ as $n$-bridges.
It is important to note that the events $\Omega_{n}$, for $n\ge 2$, are not necessarily disjoint. Figure \ref{fig:Omeganr} provides an example of a sample path of $F$ contained in $\Omega_{n}$ for different choices of $n$.
\begin{figure}[h]
  \centering
  \includegraphics[scale=1.7]{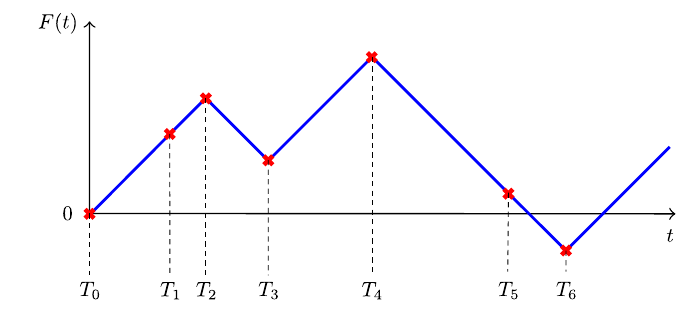}
\caption{A sample path of $F$ contained in $\Omega_{n}$ for $n=3,5,6$. The placement of each red cross represents a Poisson time $T_{k}$ and its respective value in $F$. 
\label{fig:Omeganr}}
\end{figure}

We now introduce the key quantity of this paper, namely the \emph{duration-level density for the $n$-bridge}. This represents the density of $(U(T_n-), F(T_n) - F(0))$ for a given path in $\Omega_n$.
More precisely,  for all $z, s\ge 0$, $\ell\in\mathds{R}$ and $n\ge 2$, let $\bm{\Lambda}^{(n,z)}(\theta_1,\theta_2, s,\ell)=\{  \lambda_{ij}^{(\theta_1,\theta_2, n,z)}(s,\ell)\}_{i\in\mathcal{S}^+,j\in\mathcal{S}^-}$ where
\begin{align*}
\lambda_{ij}^{(n,z)}(\theta_1,\theta_2,s,\ell)& :=\frac{\partial^2 L_{ij}^{(n,z)}(\theta_1,\theta_2,s,\ell)}{\partial s\partial\ell},\\
 L_{ij}^{(n,z)}(\theta_1,\theta_2,s,\ell) &:= \mathds{E}_{i,z}\Bigr(\exp\Bigl(-\theta_1\int_{0}^{T_n }\sigma(J(t))\dd t-\theta_2\sum_{\ell: 0<S_\ell<T_n} k(J(S_\ell) -, J(S_\ell))\Bigr)\\
 &\qquad\qquad\quad \times \mathds{1}\left\{\Omega_{n},\,J(T_{n}-)=j,\, U(T_{n}-)\le s,\, F(T_{n})-F(0)\le \ell\right\}\Bigr),
\end{align*}
and $\mathds{E}_{i,z}$ denotes the expectation under the probability measure $\mathds{P}$ conditioned on $\{ J(0)=i, U(0)=z\}$. In the following we show how the $n$-bridge density  $\bm{\Lambda}^{(n,z)}(s,\ell)$ determines the first return probability matrix $\bm{\Psi}^{(z)}$.

\begin{theorem}\label{th:Psiz3}
For $\theta_1, \theta_2, z\ge 0$,
\begin{equation}\label{eq:appL4}
\bm{\Psi}^{(z)}(\theta_1, \theta_2) = \sum_{n=2}^\infty \bm{L}^{(n,z)}(\theta_1,\theta_2,\infty, 0),
\end{equation}
where
\[\bm{L}^{(n,z)}(\theta_1,\theta_2,\infty, 0):=\int_{s=0}^\infty \int_{\ell=-\infty}^0 \bm{\Lambda}^{(n,z)}(\theta_1,\theta_2,s,\ell)\,\dd \ell \,\dd s.
\]
\end{theorem}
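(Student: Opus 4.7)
The proof exploits the Poissonian uniformization grid $0=T_0<T_1<T_2<\cdots$ introduced in Section~\ref{sec:nonhomogene}. Between consecutive Poisson epochs the jump process $J$ is constant, so $F$ is piecewise linear with slope $r(J(T_n-))$ on $(T_{n-1},T_n)$. Consequently, if $\tau<\infty$, then $\tau$ must lie in some interval $(T_{n-1},T_n]$, and the first index $n$ with $F(T_n)\le F(0)$ identifies it. Setting $\nu:=\inf\{n\ge 1:F(T_n)\le F(0)\}$, the plan is to show $\{\tau<\infty\}=\{\nu<\infty\}$ almost surely and then to partition this event according to the value of $\nu$.

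Under $\mathds{P}_{i,z}$ with $i\in\mathcal{S}^+$, $F$ is strictly increasing on $(0,T_1)$, so $\nu\ge 2$ a.s. For $n\ge 2$ I would establish the key identity $\{\nu=n\}=\Omega_n\cap\{F(T_n)-F(0)\le 0\}$ up to a null set: on $\{\nu=n\}$ one has $F(T_k)>F(0)\ge F(T_n)$ for $1\le k\le n-1$, which gives $\max\{F(T_0),F(T_n)\}=F(0)<\min\{F(T_1),\dots,F(T_{n-1})\}$, i.e., $\Omega_n$; conversely, on $\Omega_n\cap\{F(T_n)\le F(0)\}$ all intermediate levels exceed $F(0)$ while $F(T_n)$ does not, forcing $\nu=n$. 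The events $\{\nu=n\}_{n\ge 2}$ therefore partition $\{\tau<\infty\}$.

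The next step is to check that, on $\{\nu=n\}$, the Laplace exponent in~\eqref{eq:psitheta1} can equivalently be integrated up to $T_n$ rather than $\tau$, and that $J(\tau)=J(T_n-)$. Since $F(T_{n-1})>F(0)\ge F(T_n)$ with $F$ linear on $(T_{n-1},T_n)$, the slope $r(J(T_n-))$ is negative; thus $J(T_n-)\in\mathcal{S}^-$, $\tau\in(T_{n-1},T_n]$ and $J(\tau)=J(T_n-)$. Because $\sigma$ vanishes on $\mathcal{S}^-$, $\int_0^\tau\sigma(J(s))\,\mathrm{d}s=\int_0^{T_n}\sigma(J(s))\,\mathrm{d}s$. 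By the uniformization construction no arrival of $N$ can occur while $J$ is constant, so no $S_\ell$ falls in $(T_{n-1},T_n)$ and $\{\ell:0<S_\ell<\tau\}=\{\ell:0<S_\ell<T_n\}$. Substituting these identifications into~\eqref{eq:psitheta1} and summing over $n\ge 2$ produces, summand by summand, exactly $L_{ij}^{(n,z)}(\theta_1,\theta_2,\infty,0)$ (the constraint $U(T_n-)\le\infty$ being vacuous); writing the latter as the Fubini integral of $\lambda_{ij}^{(n,z)}$ yields~\eqref{eq:appL4}.

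The main technical obstacle is treating the boundary event $\{F(T_n)=F(0)\}$, where the strict inequalities defining $\Omega_n$ interact with the closed range $\ell\le 0$ in the integration defining $\bm{L}^{(n,z)}(\theta_1,\theta_2,\infty,0)$. This will be handled by observing that, conditionally on the trajectory of $J$, $F(T_n)-F(0)=\sum_{k=1}^{n}r(J(T_k-))(T_k-T_{k-1})$ is a linear combination with nonzero coefficients of independent $\mathrm{Exp}(\gamma)$ inter-epoch times, hence admits a density, so $\{F(T_n)=F(0)\}$ is negligible. Interchanging summation and expectation is then immediate since the $\{\nu=n\}$ are pairwise disjoint and the integrand is dominated by $1$.
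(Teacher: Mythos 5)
Your proof is correct and follows essentially the same route as the paper: you partition $\{\tau<\infty\}$ according to the first Poissonian index $n$ at which $F(T_n)\le F(0)$, identify each piece with $\Omega_n\cap\{F(T_n)-F(0)\le 0\}$, and sum the resulting contributions, which is exactly the paper's argument. You additionally spell out details the paper leaves implicit (matching the Laplace exponent up to $\tau$ versus $T_n$ via $\sigma\equiv 0$ on $\mathcal{S}^-$, and the negligibility of $\{F(T_n)=F(0)\}$), which are worthwhile but do not change the approach.
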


The proof of Theorem \ref{th:Psiz3} is provided in Appendix~\ref{sec:th:Psiz3}.

Theorems \ref{th:Psiz3}  offers a general formula for the first return probability of SFP driven by a DMArP. The formula is expressed in terms of the $n$-bridge densities.
In the next section, we will provide an algorithm to efficiently compute these densities.

\section{Algorithmic approach to computing the $n$-bridge density}
\label{sec:bridgedensity}
In this section, we obtain a closed-form expression for the $n$-bridge duration-level density, denoted as $\bm{\Lambda}^{(n,z)}(\theta_1,\theta_2,s,\ell)$.

The construction is inductive and follows the following steps.

\begin{itemize}

\item Since there is no notion of first return for $n = 1$, the baseline case is the $2$-bridge.
In this instance, the level-duration density  is fully characterized by the joint distribution of the first two interarrival times, if one conditions on the beginning and end state.
In this case, the analysis can be decomposed by conditioning on the event that the intermediate jump was ``extreme" or not, that is, generated by the matrix $\DDs{}$ or $\CCs{}$.  All $2$-bridges are necessarily switching in the intermediate step from a state of positive revenue to a state of negative revenue, i.e., from $\cal S^+$ to $\cal S^-$.

\item For $n> 2$, we consider the minimum intermediate jump of the $n$-bridge, as illustrated in Figure \ref{fig:IntuitionAlg}, and we denote by $T_w$ the time of the minimum intermediate jump. There are three possible situations. 
In a first case, the intermediate minimum corresponds to the first jump. Then the process starting at $T_w = T_1$ is an $(n-1)$-bridge.
In a second case, the intermediate minimum corresponds to the second-to-last jump. Then the process up to $T_w = T_{n-1}$ is an $(n-1)$-bridge.
In the last case, the intermediate minimum is neither the first nor the  ($n-1$)-th jump, and the process is decomposed into two bridges: one up to $T_w$ (a $w$-bridge) and one starting at $T_w$ (a $(n-w)$-bridge).
\end{itemize}

  \begin{figure}[h]
  \centering
  \includegraphics[scale=1.2]{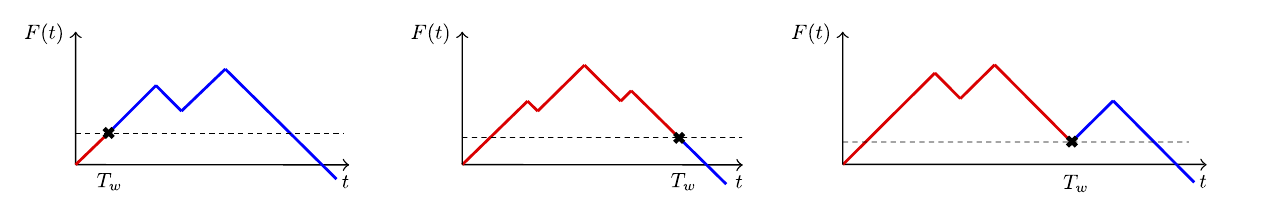}
\caption{The minimum intermediate jump of an $n$-bridge occurs at a time $T_w$ for $w \in \{1, 2, ..., n-1\}$, resulting in three possible decompositions of an $n$-bridge. \label{fig:IntuitionAlg}}
\end{figure}

First us  introduce some notation to simplify our calculations.  For any $v\ge 0$, we partition the matrices $\CCs{}(v)$ and $\DDs{}(v)$ as follows:
\begin{equation}\label{eq:partitionCD1}\CCs{}(v)=\begin{pmatrix}\CCs{++}(v)&\CCs{+-}(v)\\ \CCs{-+}(v)&\CCs{--}(v)\end{pmatrix}\quad\mbox{and}\quad\DDs{}(v)=\begin{pmatrix}\DDs{++}(v)&\DDs{+-}(v)\\ \DDs{-+}(v)&\DDs{--}(v)\end{pmatrix}.\end{equation}
Each block represents the transitions from or to $\mathcal{S}^+$ and $\mathcal{S}^-$. We will also use the Hadamard (or entrywise) multiplication denoted by $\odot$ between any arbitrary matrices $\bm{X}=\left\{ X_{ij}\right\}_{i\in\mathcal{S}^+, j\in\mathcal{S}^-}$ and $\bm{Y}=\left\{ Y_{ij}\right\}_{i\in\mathcal{S}^+, j\in\mathcal{S}^-}$. This operation is defined as $\bm{X}\odot\bm{Y}=\left\{ X_{ij}Y_{ij}\right\}_{i\in\mathcal{S}^+, j\in\mathcal{S}^-}$. Additionally, we define the entrywise composition of the matrix function $\bm{Z}(\cdot)$ with $\bm{X}$ as $\bm{Z}\circ\bm{X}=\left\{ Z_{ij}(X_{ij})\right\}_{i\in\mathcal{S}^+, j\in\mathcal{S}^-}$.

Using the above notation, we can now present a recursive algorithm for computing $\bm{\Lambda}^{(n,z)}(s,\ell)$, where $n$ ranges from $2$ to infinity.
\begin{theorem}\label{th:recursive1}
Fix $\theta_1, \theta_2\ge 0$ and let $\bm{\Lambda}^{(n,z)}(s,\ell)$ be shorthand for $\bm{\Lambda}^{(n,z)}(\theta_1, \theta_2,s,\ell)$. For $z,s\ge 0$ and $\ell\in\mathds{R}$, the collection of matrices $\{ \bm{\Lambda}^{(n,z)}(s,\ell)\}_{n\ge 2}$ can be recursively computed by
\begin{align}
\bm{\Lambda}^{(2,z)}(s,\ell)&=\bm{G}_1^{(z)}(s,\ell)\odot\left(\CCs{+-}\circ \bm{G}_2^{(z)}(s,\ell)\right) + \bm{H}_1^{(z)}(s,\ell)\odot\left(\DDs{+-}\circ \bm{H}_2^{(z)}(s,\ell)\right),\label{eq:Lambdamain1}\\
\bm{\Lambda}^{(n,z)}(s,\ell)&= \sum_{w=1}^{n-1}\bm{\Gamma}^{(n,w,z)}(s,\ell), \quad n\ge 3, \label{eq:Lambdamain2}
\end{align}
where for all $i\in\mathcal{S}^+$ and $j\in\mathcal{S}^-$,
\begin{align*}
\left(\bm{G}_1^{(z)}(s,\ell)\right)_{ij}& = \frac{\gamma^2}{r(i)-r(j)} e^{-\theta_1 \sigma(i)s-\gamma (s-z)}\mathds{1}_{-r(j)(s-z)+\ell\ge 0, r(i)(s-z)-y_2\ge 0},\\
\left(\bm{G}_2^{(z)}(s,\ell)\right)_{ij}& = z+\tfrac{-r(j)(s-z)+\ell}{r(i)-r(j)},\\
\left(\bm{H}_1^{(z)}(s,\ell)\right)_{ij}& = \frac{\gamma^2}{r(i)} \kappa^{+-}_{ij}(\theta_2)\exp\left(-\theta_1 \sigma(i)a_1-\gamma \frac{(r(i)-r(j))(s-z) + \ell}{r(i)}\right)\mathds{1}_{-r(j)(s-z) + \ell\ge 0,\, s-z\ge 0},\\
\left(\bm{H}_2^{(z)}(s,\ell)\right)_{ij}& = z+\frac{-r(j)(s-z) + \ell}{r(i)},
\end{align*}
\begin{align}
\left(\bm{\Gamma}^{(n,1,z)}(s,\ell)\right)_{i\cdot}&= \left(\int_{0}^\infty \gamma e^{-(\gamma+\theta_1\sigma(i)) u} \CCs{++}(u) \bm{\Lambda}^{(n-1,z+u)}(s,\ell-r(i) u)\dd u\right)_{i\cdot}\nonumber\\
&\quad + \left(\int_{0}^\infty \gamma e^{-(\gamma+\theta_1\sigma(i)) u} \left(\bm{\kappa}^{++}(\theta_2)\odot\DDs{++}(u)\right) \bm{\Lambda}^{(n-1,0)}(s,\ell-r(i) u)\dd u\right)_{i\cdot},\label{eq:Gamma1}\\
\bm{\Gamma}^{(n,w,z)}(s,\ell)&= \int_0^\infty \int_{v=0\vee \ell}^\infty \bm{\Lambda}^{(w,z)}(u, v)\bar{\bm{C}}^{-+}(u) \bm{\Lambda}^{(n-w,u)}(s, \ell-v) \dd u \dd v \nonumber\\
 + \int_0^\infty \int_{v=0\vee \ell}^\infty & \bm{\Lambda}^{(w,z)}(u, v)\left(\bm{\kappa}^{-+}(\theta_2)\odot\bar{\bm{D}}^{-+}(u)\right) \bm{\Lambda}^{(n-w,0)}(s, \ell-v)\dd u \dd v, \quad 2 \le w \le n-1,\label{eq:Gamma2}\\
\left(\bm{\Gamma}^{(n,n-1,z)}(s,\ell)\right)_{\cdot j} & = \left(\int_0^s \gamma e^{-\gamma u} \bm{\Lambda}^{(n-1,z)}(s-u,\ell-r(j) u) \bar{\bm{C}}^{--}(u)\dd u\right)_{\cdot j} \nonumber\\
&\quad + \left(\gamma e^{-\gamma s}\int_0^\infty  \bm{\Lambda}^{(n-1,z)}(u,\ell-r(j) s) \left(\bm{\kappa}^{--}(\theta_2)\odot\bar{\bm{D}}^{--}(u)\right)\dd u\right)_{\cdot j},\label{eq:Gamma3}
\end{align}
with 
\[\left\{e^{-\theta_2 k(i,j)}\right\}_{i,j\in\mathcal{S}}=:\begin{pmatrix}\bm{\kappa}^{++}(\theta_2) & \bm{\kappa}^{+-}(\theta_2) \\ \bm{\kappa}^{-+}(\theta_2) & \bm{\kappa}^{--}(\theta_2)\end{pmatrix},\]
$(\bm{A})_{i\cdot}$ denoting the $i$-th row of $\bm{A}$, and $(\bm{A})_{\cdot j}$ denoting the $j$-th column of $\bm{A}$.
\end{theorem}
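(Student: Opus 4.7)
The plan is to induct on $n$, obtaining (\ref{eq:Lambdamain1}) by direct computation in the base case $n=2$ and deriving (\ref{eq:Lambdamain2}) for $n\ge 3$ by partitioning the event $\Omega_n$ according to the almost surely unique index $w\in\{1,\dots,n-1\}$ at which $\min\{F(T_1),\dots,F(T_{n-1})\}$ is attained; a.s.~uniqueness follows from the continuity of the exponential spacings $T_k-T_{k-1}$ under Assumption~\ref{ass:bounded1}.

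For $n=2$, I would condition on $(\tau_1,\tau_2):=(T_1,T_2-T_1)$, a pair of independent rate-$\gamma$ exponentials, and on whether the transition at $T_1$ is $N$-inert (governed by $\CCs{+-}$, with duration continuing) or is an $N$-arrival (governed by $\DDs{+-}$, with duration resetting to zero and a cost factor $\bm{\kappa}^{+-}(\theta_2)$ accrued). In the first scenario $U(T_2-)=z+\tau_1+\tau_2$ and $F(T_2)-F(0)=r(i)\tau_1+r(j)\tau_2$; in the second, $U(T_2-)=\tau_2$ while the level change is unchanged. A change of variables to $(s,\ell):=(U(T_2-),F(T_2)-F(0))$ produces the Jacobian factors $r(i)-r(j)$ and $r(i)$ that appear in $\bm{G}_1^{(z)}$ and $\bm{H}_1^{(z)}$, respectively; combining with the joint density $\gamma^2 e^{-\gamma(\tau_1+\tau_2)}$ of $(\tau_1,\tau_2)$, the dividend contribution $e^{-\theta_1\sigma(i)\tau_1}$ (noting $\sigma\equiv 0$ on $\mathcal{S}^-$), and positivity constraints on $\tau_1,\tau_2$, this yields (\ref{eq:Lambdamain1}).

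For $n\ge 3$, conditioning on $w$ and applying the strong Markov property at $T_w$ of the uniformized construction decomposes the trajectory into a pre-$T_w$ and a post-$T_w$ piece. When $2\le w\le n-2$, the piecewise linearity of $F$ on Poissonian intervals forces $J(T_w-)\in\mathcal{S}^-$ (because $F(T_w)<F(T_{w-1})$) and $J(T_w)\in\mathcal{S}^+$ (because $F(T_w)<F(T_{w+1})$). Hence the pre-segment is a $w$-bridge ending in $\mathcal{S}^-$ with $U(T_w-)=u$ and $F(T_w)-F(T_0)=v$, the post-segment is an $(n-w)$-bridge starting in $\mathcal{S}^+$, and the condition $F(T_w)>\max\{F(T_0),F(T_n)\}$ translates to the integration range $v\ge 0\vee\ell$; summing over the $\CCs{-+}/\DDs{-+}$ transition types at $T_w$ yields (\ref{eq:Gamma2}). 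The boundary cases are analogous but degenerate: for $w=1$ the pre-segment reduces to an initial interval of length $T_1$ exponential of rate $\gamma$ with dividend factor $e^{-\theta_1\sigma(i)T_1}$, followed by a $\CCs{++}$ or $\DDs{++}$ transition into $\mathcal{S}^+$ and an $(n-1)$-bridge; for $w=n-1$ the post-segment is a single linear excursion of length $T_n-T_{n-1}$ in some state $j\in\mathcal{S}^-$, where a $\DDs{--}$-reset imposes the constraint $T_n-T_{n-1}=s$ and generates the factor $\gamma e^{-\gamma s}$.

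The main obstacle is the careful bookkeeping of the duration variable $U$ at each decomposition point: a $\CCs{}$-transition at $T_w$ preserves $U$, so the post-bridge begins with duration $U(T_w-)=u$ and is encoded by $\bm{\Lambda}^{(\cdot,u)}$, whereas a $\DDs{}$-transition resets $U$ to $0$ yielding $\bm{\Lambda}^{(\cdot,0)}$ but accruing the appropriate cost factor from $\bm{\kappa}(\theta_2)$. Matching each integrand exactly to (\ref{eq:Gamma1})--(\ref{eq:Gamma3}) requires tracking this distinction through the intermediate and boundary cases, and verifying that the pre- and post-$T_w$ excursions are conditionally independent given the triple $(J(T_w),U(T_w),F(T_w))$; the latter follows from the Markov structure of the uniformized DMArP-driven fluid process.
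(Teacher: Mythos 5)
Your proposal follows the paper's proof essentially verbatim: the base case is handled by conditioning on the two exponential spacings and on the $\CCs{+-}$ versus $\DDs{+-}$ transition type, followed by the change of variables to $(U(T_2-),F(T_2)-F(0))$ with the Jacobians $r(i)-r(j)$ and $r(i)$, and the inductive step partitions on the (a.s.\ unique) index $w$ of the intermediate minimum, applies the strong Markov property at $T_w$, and treats the boundary cases $w=1$ and $w=n-1$ and the ``$C$ preserves duration / $D$ resets duration with cost factor'' bookkeeping exactly as the paper does. The only divergence is cosmetic: in the arrival branch of the base case you take $U(T_2-)=\tau_2$, whereas the formula for $\bm{H}_2^{(z)}$ in the statement corresponds to the paper's convention $U(T_2-)=z+(T_2-T_1)$.
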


The proof of Theorem~\ref{th:recursive1} is provided in Appendix~\ref{sec:th:recursive1}.

The first step of the algorithm treats the baseline case of the $n$-bridge for $n=2$. The inductive step considers the minimum intermediate jump of an $n$-bridge for $n > 2$, as shown in Figure \ref{fig:IntuitionAlg}. Based on the occurrence of this minimum intermediate jump at a time $T_w$, $w = 1,2,..., n-1$, the intermediate $n$-bridge is decomposed into a $w$-bridge and an $(n-w)$-bridge.

\section{Ruin of the revenue process}
\label{sec:ruin}
In the previous sections, we demonstrated how to analyze the DMArP  on the Poissonian stochastic grid. We can now use the the first return probabilities and randomization of the initial level $F(0)$ to derive ruin time descriptors.
In this section, we derive a descriptor for the \emph{ruin time} $\rho$ of the DMArP-driven SFP $F$, with the initial condition $F(0)=u\ge 0$, where
\[\rho=\inf\{s>0 : F(s)< 0\}.\]
In existing time-homogeneous SFP literature, the ruin time $\rho$ is investigated using successive downcrossing arguments (see e.g., \cite[Section 3]{asmussen1995stationary}). However, these arguments rely heavily on the time-homogeneous property of the classic SFP, which does not hold when a DMArP is used to model the process.

As an alternative, we propose an approximation method based on randomization. Specifically, we study fixed stochastic systems at a specified point by substituting that point with a random variable that closely approximates it. To be more specific, let us define
\begin{align}\label{eq:rand1}&\psi_{ij}^{(z)}(\theta_1,\theta_2,u) :=\nonumber \\
&\mathds{E}\left(\left.
\begin{array}{c} \exp\left(-\theta_1\int_{0}^\rho \sigma(J(s))\dd s-\theta_2\sum_{\ell: S_\ell<\rho} k(J(S_\ell) -, J(S_\ell))\right)\\
\times \mathds{1}\{\rho<\infty, J(\rho)= j\}
\end{array}
\right|\, U(0)= z, J(0)=i, F(0)=u\right),\end{align}
which corresponds to the ruin descriptor of process $F$ with the initial state $F(0)=u$. The randomization methods for our current context suggest replacing $u$ in (\ref{eq:rand1}) with a random approximation, denoted $u_*$, resulting in
\begin{equation}\label{eq:rand2}
\psi_{ij}^{(z)}(\theta_1,\theta_2,u)\approx\mathds{E}(\psi_{ij}^{(z)}(\theta_1,\theta_2,u_*)).\end{equation}
A suitable choice for $u_*$ is an Erlang random variable with shape $n$ and rate $n/u$, ensuring that $\mathds{E}(u_*)=u$ and $\mbox{Var}(u_*)=u^2/n$. As $n$ approaches infinity, $u_*$ converges in probability to $u$, suggesting that approximation \ref{eq:rand2} is accurate for large values of $n$. This specific technique, called Erlangization, is widely utilized in stochastic modeling due to its tractability (see, for example, \cite{carr1998randomization1,asmussen2002erlangian1,bladt2019parisian1,albrecher2022randomized}).

In our situation, we can utilize the Erlangization method to conveniently approximate the ruin descriptor $\psi_{ij}^{(z)}(\theta_1,\theta_2,u)$. To simplify the setup, assume $z=0$ and $J(0)=i_0\in\mathcal{S}^+$. We then explore an alternative SFP, denoted as $F_*=\{F_*(t)\}_{t\ge 0}$, which is driven by a DMArP $J_*=\{J_*(t)\}_{t\ge 0}$ and has state space $\mathcal{S}_*=\mathcal{S}_*^+\cup\mathcal{S}_*^-$ and parameters $(\bm{\alpha}_*,\{ \bm{C}_*(s)\}_{s\ge 0},\{ \bm{D}_*(s)\}_{s\ge 0})$. The idea is to embed the original $F$ with the initial state $F(0)=u_*$ into $F_*$ with $F_*(0)=0$ in a time-shifted fashion. This is accomplished by allowing $F_*$ to increase linearly at a rate of $1$ up to time $u_*$ and concatenating the path of $F$ afterward. Since the Erlang random variable $u_*$ can be interpreted as a sum of $n$ exponential random variables with a rate of $n/u$, this idea can be implemented by artificially allowing $J_*$ to visit $n$ sequential states with an exit rate of $n/u$ before concatenating the original $J$ (see Figure \ref{fig:Erlang}). 
\begin{figure}[h]
  \centering
  \includegraphics[scale=1.15]{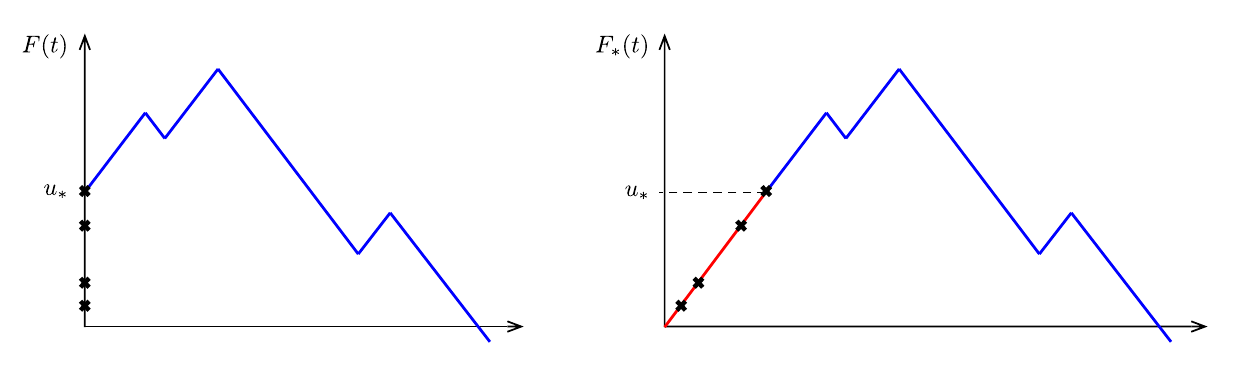}
\caption{Embedding of the process $F$ started at $u$ into the process $F_*$ started at $0$ with added artificial states for $n=4$. Jumps within the artificial states are shown with a cross, resulting in a height $u_*$ which is Erlang-distributed of shape $4$ and rate $4/u$.\label{fig:Erlang}}
\end{figure}
The resulting SFP is then obtained by selecting appropriate parameters and transition probabilities to match the original process $F$, which is done as follows:

\begin{itemize}
\item $\mathcal{S}_*^+=\{1_E,2_E,\dots, n_E\}\cup \mathcal{S}^+$ and $\mathcal{S}_*^-= S^+$;
\item $\bm{\alpha}_*$ is the probability vector $(1,0,0,\dots, 0)$;
\item for all $v\ge 0$, $\bm{C}_*(v)$ is block partitioned in $\mathcal{S}_*^+\cup \mathcal{S}_*^-$ as
\[\left(\begin{array}{c|c}\bm{C}^{++}_*(v) & \bm{C}^{+-}_*(v)\\
\hline
\bm{C}^{-+}_*(v)& \bm{C}^{--}_*(v)\end{array}\right)=\left(\begin{array}{cc|c}-(n/u)\bm{I}& \bm{0} & \bm{0}\\
\bm{0}& \bm{C}^{++}(v) & \bm{C}^{+-}(v)\\
\hline
\bm{0}&\bm{C}^{-+}(v)& \bm{C}^{--}(v)\end{array}\right);\]
\item for all $v\ge 0$, $\bm{D}_*(v)$ is block partitioned in $\mathcal{S}_*^+\cup \mathcal{S}_*^-$ as
\[\left(\begin{array}{c|c}\bm{D}^{++}_*(v) & \bm{D}^{+-}_*(v)\\
\hline
\bm{D}^{-+}_*(v)& \bm{D}^{--}_*(v)\end{array}\right)=\left(\begin{array}{cc|c}(n/u)\bm{E}_1 & (n/u)\bm{E}_2 & \bm{0}\\
\bm{0}& \bm{D}^{++}(v) & \bm{D}^{+-}(v)\\
\hline
\bm{0}&\bm{D}^{-+}(v)& \bm{D}^{--}(v)\end{array}\right),\]
where $\bm{E}_1$ represents the upper diagonal matrix of ones, and $\bm{E}_2$ the matrix which is equal to one in its $(n,i_0)$ entry and zero elsewhere;
\item the net revenue rate function $r_*(\cdot)$ is defined by $r_*(i)=1$ for all $i\in\{1_E,2_E,\dots, n_E\}$, and $r_*(j)=r(j)$ for all $j\in \mathcal{S}^+\cup\mathcal{S}^-$.
\end{itemize}
Consequently, $\psi_{i_0,j}^{(0)}(\theta_1,\theta_2,u)$ could be approximated by the first return descriptor of $F_*$ which takes the form
\begin{align}\label{eq:rand1}&\psi_{1_E,j}^{*(0)}(\theta_1,\theta_2)=\nonumber \\
&\mathds{E}\left(\left.
\begin{array}{c} \exp\left( -\theta_1\int_{0}^{\tau_*} \sigma(J_*(s))\dd s - \theta_2\sum_{\ell: S_{*\ell}<\tau_*} k(J_*(S_{*\ell}) -, J_*(S_{*\ell})) \right)\\
\times \mathds{1}\{\tau_*<\infty, J_*(\tau_*)= j\}
\end{array}
\right|\, U_*(0)= z, J_*(0)=1_E\right),\end{align}
where $S_{*\ell}$, $U_*$, and $\tau_*$ correspond to the interarrival times, duration process, and first return time of the SFP $F_*$. Here, the functions $\sigma$ and $k$ are understood as extensions of their non-randomized SFP versions and are equal to zero whenever their arguments are outside of $\mathcal{S}^+\cup\mathcal{S}^-$.

\subsection{Calendar-time dependence}

In the case where the modeler is only concerned with calendar time-inhomogeneity, and not with duration dependence, it is possible to set $\bm{D}(\cdot)=\bm{0}$ in our DMArP model. This results in the underlying process $J$ being a calendar-time non-homogeneous Markov jump process. Incorporating exact seasonality in the transition rates is possible in this case, as opposed to embedding the seasonality into the state-space. However, sacrificing the duration dependence is a trade-off. For the calendar-time inhomogeneous case, we can obtain additional results, namely the finite time return probabilities.

\begin{theorem}\label{th:timeinhomogeneouspsi1}
Let $F$ be an SFP driven by a DMArP $(\{ \bm{C}(v)\}_{v\ge 0}, \{ \bm{D}(v)\}_{v\ge 0})$ with $\bm{D}(v)=\bm{0}$ for all $v\ge 0$. For $t> 0$, let $\bm{\Psi}^{(z,*)}(t)=\{ \Psi_{ij}^{(z,*)}(t)\}_{i\in\mathcal{S}^+,j\in\mathcal{S}^-}$ be the \emph{finite time return probability matrix over $T_1<T_2<\dots$} defined by
\[\Psi_{ij}^{(z,*)}(t)=\mathds{P}_{i,z}(\tau_*\le t, J(\tau_*-)=j),\]
where
\[\tau_* =\inf \{ T_n: F(T_n)< F(0)\}.\]
Then,
\begin{equation}\label{eq:Psistar3}
\bm{\Psi}^{(z,*)}(t) = \sum_{n=2}^\infty \bm{L}^{(n,z)}(0,0, t+z, 0).
\end{equation}
Furthermore, for fixed $t>0$,
\begin{align}\label{eq:ratetruncation1}
\sum_{n=m}^\infty \big(\bm{L}^{(n,z)}(0,0,t+z, 0)\big)_{ij} = o\left(e^{-m(\log m-\log(\gamma t) - 1)}\right)\quad\mbox{as}\quad m\rightarrow\infty.
\end{align}
\end{theorem}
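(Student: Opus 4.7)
The plan is to exploit the fact that under $\bm{D}(\cdot)\equiv\bm{0}$ the DMArP produces no arrivals (so $S_1=\infty$ almost surely) and the duration process collapses to the deterministic $U(t)=t+z$. Consequently, the constraint $U(T_n-)\le t+z$ embedded in the definition of $L^{(n,z)}_{ij}(0,0,t+z,0)$ is simply $T_n\le t$. For the equality (\ref{eq:Psistar3}) I would decompose
\[\{\tau_*\le t,\, J(\tau_*-)=j\}=\bigcup_{n=2}^{\infty}\{\tau_*=T_n\le t,\, J(T_n-)=j\}\]
as a disjoint union, noting that $n=1$ is excluded because $J(0)=i\in\mathcal{S}^+$ forces $F(T_1)>F(0)$. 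The key step is the almost-sure identification $\{\tau_*=T_n\}=\Omega_n\cap\{F(T_n)<F(0)\}$: the event $\{\tau_*=T_n\}$ requires $F(T_k)\ge F(0)$ for $k=1,\dots,n-1$ together with $F(T_n)<F(0)$, and once the boundary case $F(T_k)=F(0)$ is ruled out (see below), this coincides with the inequalities defining $\Omega_n$ intersected with $\{F(T_n)<F(0)\}$. Specialising $L^{(n,z)}_{ij}(\theta_1,\theta_2,s,\ell)$ at $\theta_1=\theta_2=0$, $s=t+z$, $\ell=0$ then gives precisely $\mathds{P}_{i,z}(\tau_*=T_n\le t,\, J(T_n-)=j)$, and summing over $n\ge 2$ yields (\ref{eq:Psistar3}).

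For the tail estimate (\ref{eq:ratetruncation1}) I would crudely bound each entry by dropping the indicators $\Omega_n$, $\{J(T_n-)=j\}$, and $\{F(T_n)-F(0)\le 0\}$ from $L^{(n,z)}_{ij}(0,0,t+z,0)$, obtaining
\[L^{(n,z)}_{ij}(0,0,t+z,0)\le\mathds{P}_{i,z}(T_n\le t)=\mathds{P}(M(t)\ge n),\]
where $M$ is the uniformizing Poisson process of intensity $\gamma$ from Assumption \ref{ass:bounded1}. Fubini gives
\[\sum_{n=m}^{\infty}\mathds{P}(M(t)\ge n)=\mathds{E}\bigl[(M(t)-m+1)^+\bigr],\]
and for $m>2\gamma t$ the successive ratios $\mathds{P}(M(t)=k+1)/\mathds{P}(M(t)=k)=\gamma t/(k+1)$ are bounded by $1/2$ whenever $k\ge m-1$, so a geometric-type summation produces a finite constant $C$ with $\sum_{n=m}^{\infty}\mathds{P}(M(t)\ge n)\le C\,\mathds{P}(M(t)=m)$. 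Applying Stirling's inequality $m!\ge\sqrt{2\pi m}(m/e)^m$ then gives
\[\mathds{P}(M(t)=m)\le\frac{e^{-\gamma t}}{\sqrt{2\pi m}}\left(\frac{e\gamma t}{m}\right)^{m}=\frac{e^{-\gamma t}}{\sqrt{2\pi m}}\,e^{-m(\log m-\log(\gamma t)-1)},\]
and the $m^{-1/2}$ prefactor upgrades this $O$-bound to the desired $o(\cdot)$.

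The only subtle step is the null-set argument identifying $\{\tau_*=T_n\}$ with $\Omega_n\cap\{F(T_n)<F(0)\}$, which requires $\mathds{P}_{i,z}(F(T_k)=F(0))=0$ for every $k\ge 1$. Conditional on the discrete sequence of states visited by $J$ along the Poissonian grid, $F(T_k)-F(0)$ is a non-trivial linear combination (with coefficients $r(J(T_{\ell-1}))\ne 0$) of the $k$ exponential interarrival times of $M$, so the event $\{F(T_k)=F(0)\}$ pulls back to a Lebesgue-null hyperplane in $\mathds{R}_+^k$; summing over the countably many admissible trajectories of $J$ yields the claim. Everything else reduces to Fubini, Stirling, and the classical Poisson tail estimate above.
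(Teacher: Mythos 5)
Your proof is correct and follows essentially the same route as the paper: the identity comes from decomposing $\{\tau_*\le t,\,J(\tau_*-)=j\}$ into the disjoint $n$-bridge events $\Omega_n\cap\{T_n\le t,\,F(T_n)-F(0)\le 0\}$ together with the observation that $\bm{D}\equiv\bm{0}$ forces $U(T_n-)=T_n+z$, and the tail rate comes from a Poisson tail bound plus Stirling. The only cosmetic differences are that the paper bounds the entire tail sum at once by $\mathds{P}(T_m\le t)$ using disjointness of the bridge events, whereas you bound term by term and recover the same rate via the geometric-ratio summation, and that your explicit null-set argument for $\{F(T_k)=F(0)\}$ spells out a detail the paper leaves implicit.
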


The proof of Theorem~\ref{th:timeinhomogeneouspsi1} is provided in Appendix~\ref{sec:th:timeinhomogeneouspsi1}.

In this case, similar to first return descriptors presented in Section~\ref{sec:Laplace}, Theorem \ref{th:timeinhomogeneouspsi1} establishes the finite time first return probabilities as an infinite series.
In addition, it establishes a rate of convergence for this series. 
This result allows us to obtain the probability of ruin before a given time, as well as the probability of being ruined in a given state.

\section{Concluding remarks}\label{sec:conclude}

We developed a risk assessment framework for a project that is driven by a duration-dependent Markov arrival process, a novel process that can account for heavy-tailed interarrival times. At the same time, seasonality and extreme events can be  embedded into the structure of the state-space itself.
By treating the revenue process as a risk process, we derive indicators of long term viability as well as ruin probabilities.

We obtain the Laplace transform of the project value based on the analysis of a new construction, namely the $n$-bridge: the revenue process  restricted to paths with $n$ arrivals and  for which it remains above the beginning and the end points at all intermediate arrival times. We provide an algorithmic approach to computing the $n$-bridge duration-level density, which, in turn, provides a tractable way to compute the Laplace transform of the project value as well as the first return probabilities and ruin probabilities.

There are several practical implications of our results.
The first return descriptor matrix allows us to rank the state-space in terms of the probabilities of visiting those states at the first return time.
This would inform decision makers on the potential costs of resuming regular operations, since the Markovian system would continue from those states. There are likely costs or capital injections required to bring the revenue process above a certain barrier or to allow it to linger below its initial value for a prescribed time, such as a Parisian-type ruin. These type of problems pertain to the management of the process around the first return time and are left for future research.

By understanding the risk of ruin and its dependence on the fixed and continuous cost structure, stakeholders can better assess the risks associated with solar energy projects under different dividend and cost structures. 
This information as well as the risk assigned to various states allows us to identify potential threats to revenue generation and viability.

In our work, the transition matrices are given. Of course, in reality, these matrices are subject to change. Identifying potential revenue shortfalls in advance would lead the decision-maker to plan maintenance and repair in such a way that the system's transitions avoid certain states.

 As a future direction, one can define a control problem in which the intensity matrices $\bm{C}$ and $\bm{D}$ are policy-dependent in addition to being duration dependent, and where the fixed costs also depend on the action taken by the controller. The controller would then seek to maximize profit, and it is understood that the controller's policy impacts the transitions as well as the profit structure. At any point in time, interventions can be made to help minimize the average future duration of negative revenue impact, thus ensuring the project remains profitable in the long run. Endowed with the optimal policy and an understanding of associated costs, decision-makers can make informed choices regarding project design, resource allocation, and mitigation strategies.

The insights gained from the analysis of the ruin time can assist investors and policymakers in managing the risks associated with solar energy projects. For example, by understanding the potential shortfall, investors can purchase insurance. Policymakers can also use the analysis of the ruin time to design policies and regulations that promote the sustainability and profitability of solar energy systems.

\paragraph{Acknowledgements.} The authors gratefully acknowledge financial support from the National Science Foundation through grant \#1653354. Additionally, OP acknowledges financial support from the Swiss National Science Foundation Project 200021\_191984.

\bibliographystyle{abbrv}
\bibliography{oscar}

\begin{thebibliography}{10}

\bibitem{abdelrahman2017markov}
O.~H. Abdelrahman.
\newblock A {M}arkov-modulated diffusion model for energy harvesting sensor
  nodes.
\newblock {\em Probability in the Engineering and Informational Sciences},
  31(4):505--515, 2017.

\bibitem{albrecher2022randomized}
H.~Albrecher and J.~C. Araujo-Acuna.
\newblock On the randomized {S}chmitter problem.
\newblock {\em Methodology and Computing in Applied Probability},
  24(2):515--535, 2022.

\bibitem{albrecher2019inhomogeneous}
H.~Albrecher and M.~Bladt.
\newblock Inhomogeneous phase-type distributions and heavy tails.
\newblock {\em Journal of Applied Probability}, 56(4):1044--1064, 2019.

\bibitem{albrecher2022mortality}
H.~Albrecher, M.~Bladt, M.~Bladt, and J.~Yslas.
\newblock Mortality modeling and regression with matrix distributions.
\newblock {\em Insurance: Mathematics and Economics}, 107:68--87, 2022.

\bibitem{albrecher2022fitting}
H.~Albrecher, M.~Bladt, and J.~Yslas.
\newblock Fitting inhomogeneous phase-type distributions to data: the
  univariate and the multivariate case.
\newblock {\em Scandinavian Journal of Statistics}, 49(1):44--77, 2022.

\bibitem{angius2014approximate}
A.~Angius and A.~Horv{\'a}th.
\newblock Approximate transient analysis of queuing networks by decomposition
  based on time-inhomogeneous {M}arkov arrival processes.
\newblock In {\em Proceedings of the 8th International Conference on
  Performance Evaluation Methodologies and Tools}, pages 255--262, 2014.

\bibitem{artalejo2010markovian}
J.~R. Artalejo and A.~G{\'o}mez-Corral.
\newblock Markovian arrivals in stochastic modelling: a survey and some new
  results.
\newblock {\em Statistics and Operations Research Transactions}, pages
  101--156, 2010.

\bibitem{asmussen1995stationary}
S.~Asmussen.
\newblock Stationary distributions for fluid flow models with or without
  {B}rownian noise.
\newblock {\em Communications in Statistics. Stochastic Models}, 11(1):21--49,
  1995.

\bibitem{asmussen2002erlangian1}
S.~Asmussen, F.~Avram, and M.~Usabel.
\newblock Erlangian approximations for finite-horizon ruin probabilities.
\newblock {\em ASTIN Bulletin: The Journal of the IAA}, 32(2):267--281, 2002.

\bibitem{asmussen1993marked}
S.~Asmussen and G.~Koole.
\newblock Marked-point processes as limits of {M}arkovian arrival streams.
\newblock {\em Journal of Applied Probability}, 30(2):365--372, 1993.

\bibitem{bean2019finite}
N.~Bean, G.~Ngyuen, and F.~Poloni.
\newblock A new algorithm for time-dependent first-return probabilities of a
  fluid queue.
\newblock {\em Matrix-Analytic Methods in Stochastic Models}, pages 18--22,
  2019.

\bibitem{bladt2022phase}
M.~Bladt.
\newblock Phase-type distributions for claim severity regression modeling.
\newblock {\em ASTIN Bulletin: The Journal of the IAA}, 52(2):417--448, 2022.

\bibitem{bladt2020matrix}
M.~Bladt, S.~Asmussen, and M.~Steffensen.
\newblock Matrix representations of life insurance payments.
\newblock {\em European Actuarial Journal}, 10:29--67, 2020.

\bibitem{bladt2017matrix}
M.~Bladt and B.~F. Nielsen.
\newblock {\em Matrix-Exponential Distributions in Applied Probability},
  volume~81.
\newblock Springer, 2017.

\bibitem{bladt2019parisian1}
M.~Bladt, B.~F. Nielsen, and O.~Peralta.
\newblock Parisian types of ruin probabilities for a class of dependent
  risk-reserve processes.
\newblock {\em Scandinavian Actuarial Journal}, 2019(1):32--61, 2019.

\bibitem{bladt2022phase2}
M.~Bladt and J.~Yslas.
\newblock Phase-type mixture-of-experts regression for loss severities.
\newblock {\em Scandinavian Actuarial Journal}, pages 1--27, 2022.

\bibitem{blaga2019current}
R.~Blaga, A.~Sabadus, N.~Stefu, C.~Dughir, M.~Paulescu, and V.~Badescu.
\newblock A current perspective on the accuracy of incoming solar energy
  forecasting.
\newblock {\em Progress in energy and combustion science}, 70:119--144, 2019.

\bibitem{carmona2022mean}
R.~Carmona, G.~Dayan{\i}kl{\i}, and M.~Lauri{\`e}re.
\newblock Mean field models to regulate carbon emissions in electricity
  production.
\newblock {\em Dynamic Games and Applications}, 12(3):897--928, 2022.

\bibitem{carr1998randomization1}
P.~Carr.
\newblock Randomization and the {A}merican put.
\newblock {\em The Review of Financial Studies}, 11(3):597--626, 1998.

\bibitem{cinlar1969markov}
E.~Cinlar.
\newblock Markov renewal theory.
\newblock {\em Advances in Applied Probability}, 1(2):123--187, 1969.

\bibitem{davis1984piecewise}
M.~H. Davis.
\newblock Piecewise-deterministic {M}arkov processes: {A} general class of
  non-diffusion stochastic models.
\newblock {\em Journal of the Royal Statistical Society: Series B
  (Methodological)}, 46(3):353--376, 1984.

\bibitem{davis2018markov}
M.~H. Davis.
\newblock {\em Markov models and Optimization}.
\newblock Routledge, 2018.

\bibitem{de2003generalized}
C.~De~Michele and G.~Salvadori.
\newblock A generalized {P}areto intensity-duration model of storm rainfall
  exploiting 2-copulas.
\newblock {\em Journal of Geophysical Research: Atmospheres}, 108(D2), 2003.

\bibitem{deulkar2020sizing}
V.~Deulkar, J.~Nair, and A.~A. Kulkarni.
\newblock Sizing storage for reliable renewable integration: {A} large
  deviations approach.
\newblock {\em Journal of Energy Storage}, 30:101443, 2020.

\bibitem{dumitrescu2022energy}
R.~Dumitrescu, M.~Leutscher, and P.~Tankov.
\newblock Energy transition under scenario uncertainty: a mean-field game
  approach.
\newblock {\em arXiv preprint arXiv:2210.03554}, 2022.

\bibitem{feller1968introduction}
W.~Feller.
\newblock {\em An Introduction to Probability Theory and Its Applications:
  Volume I}, volume~1.
\newblock John Wiley \& Sons, 1968.

\bibitem{fischer1993markov}
W.~Fischer and K.~Meier-Hellstern.
\newblock The {M}arkov-modulated {P}oisson process (mmpp) cookbook.
\newblock {\em Performance Evaluation}, 18(2):149--171, 1993.

\bibitem{gill1994lectures}
R.~D. Gill.
\newblock Lectures on survival analysis.
\newblock In {\em Lectures on Probability Theory}, pages 115--241. Springer,
  1994.

\bibitem{gill1990survey}
R.~D. Gill and S.~Johansen.
\newblock A survey of product-integration with a view toward application in
  survival analysis.
\newblock {\em The Annals of Statistics}, pages 1501--1555, 1990.

\bibitem{glynn1987upper}
P.~W. Glynn.
\newblock Upper bounds on {P}oisson tail probabilities.
\newblock {\em Operations Research Letters}, 6(1):9--14, 1987.

\bibitem{hocaouglu2011stochastic}
F.~O. Hocao{\u{g}}lu.
\newblock Stochastic approach for daily solar radiation modeling.
\newblock {\em Solar Energy}, 85(2):278--287, 2011.

\bibitem{huang2021hybrid}
X.~Huang, Q.~Li, Y.~Tai, Z.~Chen, J.~Zhang, J.~Shi, B.~Gao, and W.~Liu.
\newblock Hybrid deep neural model for hourly solar irradiance forecasting.
\newblock {\em Renewable Energy}, 171:1041--1060, 2021.

\bibitem{kaba2018estimation}
K.~Kaba, M.~Sar{\i}g{\"u}l, M.~Avc{\i}, and H.~M. Kand{\i}rmaz.
\newblock Estimation of daily global solar radiation using deep learning model.
\newblock {\em Energy}, 162:126--135, 2018.

\bibitem{Karandikar:1995vo}
R.~L. Karandikar and V.~G. Kulkarni.
\newblock {Second-order fluid flow models: reflected {B}rownian motion in a
  random environment}.
\newblock {\em Operations Research}, 43(1):77--88, 1995.

\bibitem{latouche2018analysis}
G.~Latouche and G.~T. Nguyen.
\newblock Analysis of fluid flow models.
\newblock {\em Queueing Models and Service Management}, 1(2):1--29, 2018.

\bibitem{latouche2004markov}
G.~Latouche and T.~Takine.
\newblock Markov-renewal fluid queues.
\newblock {\em Journal of Applied Probability}, 41(3):746--757, 2004.

\bibitem{ledoux2008strong}
J.~Ledoux.
\newblock Strong convergence of a class of non-homogeneous {M}arkov arrival
  processes to a {P}oisson process.
\newblock {\em Statistics \& Probability Letters}, 78(4):445--455, 2008.

\bibitem{margolius2016analysis}
B.~Margolius and M.~M. O’Reilly.
\newblock The analysis of cyclic stochastic fluid flows with time-varying
  transition rates.
\newblock {\em Queueing Systems}, 82(1-2):43--73, 2016.

\bibitem{morf2014sunshine}
H.~Morf.
\newblock Sunshine and cloud cover prediction based on {M}arkov processes.
\newblock {\em Solar Energy}, 110:615--626, 2014.

\bibitem{neuts1975probability}
M.~F. Neuts.
\newblock Probability distributions of phase type.
\newblock {\em Liber Amicorum Prof. Emeritus H. Florin}, 1975.

\bibitem{neuts1979versatile}
M.~F. Neuts.
\newblock A versatile {M}arkovian point process.
\newblock {\em Journal of Applied Probability}, 16(4):764--779, 1979.

\bibitem{neuts1994matrix}
M.~F. Neuts.
\newblock {\em Matrix-geometric solutions in stochastic models: an algorithmic
  approach}.
\newblock Courier Corporation, 1994.

\bibitem{neuts1995algorithmic}
M.~F. Neuts.
\newblock {\em Algorithmic probability: a collection of problems}, volume~3.
\newblock CRC Press, 1995.

\bibitem{rodriguez1987some}
I.~Rodriguez-Iturbe, D.~R. Cox, and V.~Isham.
\newblock Some models for rainfall based on stochastic point processes.
\newblock {\em Proceedings of the Royal Society of London. A. Mathematical and
  Physical Sciences}, 410(1839):269--288, 1987.

\bibitem{Rogers:1994uoa}
L.~C.~G. Rogers.
\newblock {Fluid models in queueing theory and Wiener-Hopf factorization of
  Markov chains}.
\newblock {\em The Annals of Applied Probability}, 4(2):390--413, 1994.

\bibitem{ross2010introduction}
S.~M. Ross.
\newblock Introduction to {P}robability {M}odels.
\newblock 2010.

\bibitem{salvadori2001generalized}
G.~Salvadori and C.~De~Michele.
\newblock From generalized pareto to extreme values law: Scaling properties and
  derived features.
\newblock {\em Journal of Geophysical Research: Atmospheres},
  106(D20):24063--24070, 2001.

\bibitem{shrivats2022mean}
A.~V. Shrivats, D.~Firoozi, and S.~Jaimungal.
\newblock A mean-field game approach to equilibrium pricing in solar renewable
  energy certificate markets.
\newblock {\em Mathematical Finance}, 32(3):779--824, 2022.

\bibitem{van2018uniformization}
N.~M. van Dijk, S.~P.~J. van Brummelen, and R.~J. Boucherie.
\newblock Uniformization: {B}asics, extensions and applications.
\newblock {\em Performance Evaluation}, 118:8--32, 2018.

\bibitem{weron2007modeling}
R.~Weron.
\newblock {\em Modeling and forecasting electricity loads and prices: A
  statistical approach}.
\newblock John Wiley \& Sons, 2007.

\bibitem{woo2011impact}
C.-K. Woo, I.~Horowitz, J.~Moore, and A.~Pacheco.
\newblock The impact of wind generation on the electricity spot-market price
  level and variance: {T}he {T}exas experience.
\newblock {\em Energy Policy}, 39(7):3939--3944, 2011.

\end{thebibliography}

\appendix

\section{Proofs}

\subsection{Proof of Lemma~\ref{lem:auxprod3}}\label{sec:lem:auxprod3}

By conditioning on the value of the first jump time $T_1\sim \mathrm{Exp}(\gamma)$,
\begin{align*}
\bm{G}(s,t) = e^{-\gamma(t-s)}\bm{I} + \int_{s}^t \gamma e^{-\gamma (v-s)} \CCs{}(v) \bm{G}(v,t)\dd v
\end{align*}
or equivalently,
\begin{align*}
(e^{\gamma(t-s)}\bm{I})\bm{G}(s,t) = \bm{I} + \int_{s}^t \gamma  \CCs{}(v) \left((e^{\gamma (t-v)}\bm{I})\bm{G}(v,t)\right)\dd v.
\end{align*}
Thus, by (\ref{eq:volterra1}) 
\begin{equation}\label{eq:auxprod3}(e^{\gamma(t-s)}\bm{I})\bm{G}(s,t)= \prod_{v\in (s,t]}(\bm{I}+\gamma\CCs{}(v)\dd v).\end{equation}
Moreover, since $e^{-\gamma(t-s)}\bm{I} = \prod_{v\in (s,t]}(\bm{I}-\gamma\bm{I}\dd v)$, then 
\begin{align*}
\bm{G}(s,t) &= \left(\prod_{v\in (s,t]}(\bm{I}-\gamma\bm{I}\dd v)\right)\left(\prod_{v\in (s,t]}(\bm{I}+\gamma\CCs{}(v)\dd v)\right)\\
& = \prod_{v\in (s,t]}(\bm{I}+\gamma(\CCs{}(v)-\bm{I})\dd v) = \prod_{v\in (s,t]}(\bm{I}+\bm{C}(v)\dd v),
\end{align*}
where the second equality follows from the generalised Trotter formula \cite[Page 126]{gill1994lectures}.

\subsection{Proof of Theorem~\ref{th:multivariateprod1}}\label{sec:th:multivariateprod1}

For $k\in\mathcal{S}$ and $n\ge 1$, let 
\[A_{k,n}:=\big\{ S_1\in [y_1, y_1+\dd y_1),\, S_2-S_1\in [y_2, y_2+\dd y),\, \dots\,,\, S_n-S_{n-1}\in [y_n, y_n+ \dd y_n),\, J(S_n)=k\big\},\]
and $A_{k,0}=\{ J(0)=k\}$. Then, for $n\ge 1$,
\begin{align*}
\mathds{P}(A_{k,n})& =  \sum_{j\in\mathcal{S}}\mathds{E}\left(\mathds{1}\left\{  A_{j,n-1}\right\}\mathds{P}(S_n-S_{n-1} \in [y_n, y_n+ \dd y_n), J(S_n)=k\mid \mathscr{F}_{S_{n-1}})\right)\\
&= \sum_{j\in\mathcal{S}}\mathds{E}\left(\mathds{1}\left\{  A_{j,n-1}\right\} \big(\bm{G}(y_n)\bm{D}(y_n)\big)_{jk}\dd y_n \right)\\
&= \sum_{j\in\mathcal{S}}\mathds{P}( A_{j,n-1}) \big(\bm{G}(y_n)\bm{D}(y_n)\big)_{jk}\dd y_n,
\end{align*}
where the second equality follows from Lemma \ref{lem:auxprod3}. By recursive steps, and taking into account that $\mathds{P}(A_{i,0})=\alpha_i$, we get
\begin{equation}\label{eq:MAPlike1}\mathds{P}(A_{k,n}) = \left(\bm{\alpha} \bm{G}(y_1) \bm{D}(y_1)\bm{G}(y_2) \bm{D}(y_2) \cdots \bm{G}(y_n) \bm{D}(y_n)\right)_k.\end{equation}
Summing (\ref{eq:MAPlike1}) over $k\in\mathcal{S}$ leads to (\ref{eq:MAPlikedensity}).

\subsection{Proof of Proposition~\ref{prop:marginals1}}\label{sec:prop:marginals1}

Integrating (\ref{eq:MAPlikedensity}) w.r.t. $y_1, y_2,\dots, y_{n-1}$ implies that
\[f_{S_n-S_{n-1}}(y_n) = \bm{\alpha} \bm{N}^{n-1} \bm{G}(y_n) \bm{D}(y_n)\bm{1} =   \bm{\alpha} \bm{N}^{n-1} \bm{G}(y_n) (-\bm{C}(y_n)\bm{1}),\]
where the last equality follows from $(\bm{C}(v)+\bm{D}(v))\bm{1} =\bm{0}$, $v\ge 0$. Consequently, $S_n-S_{n-1}\sim \mathrm{IPH}\big(\bm{\alpha}\bm{N}^{n-1}, \{ \bm{C}(v)\}_{v\ge 0}\big)$.

\subsection{Proof of Theorem \ref{th:Psiz3}}\label{sec:th:Psiz3}
For $n\ge 2$, define
\[B_n=\Omega_n\cap \{ F(T_n) - F(0)\le 0\}.\]
Notice that $\{ B_n\}_{n=2}^\infty$ is a partition of $\{ \tau<\infty\}$. Indeed,
\begin{align}
\{ \tau<\infty\} & = \bigcup_{n=2}^\infty \left\{ \bigcap_{k=2}^{n-1} \{ F(T_k)-F(0)>0\}, F(T_n)-F(0)\le 0\right\}\label{eq:auxBn5}\\
& = \bigcup_{n=2}^\infty \left\{ \Omega_{n}, F(T_n)-F(r)\le 0\right\} = \bigcup_{n=2}^\infty B_n;\nonumber
\end{align}
that $\{ B_n\}_{n=2}^\infty$ are disjoint is easily seen from the r.h.s. of (\ref{eq:auxBn5}). Thus, (\ref{eq:appL4}) follows.

\subsection{Proof of Theorem~\ref{th:recursive1}}\label{sec:th:recursive1}

\noindent{\bf Baseline: $n = 2$}. We consider a $2$-bridge.
Fix $i\in\mathcal{S}^+$, $j\in\mathcal{S}^-$. Note that for $a_1,a_2\in\mathds{R}$,
\begin{align*}
& \frac{\partial^2}{\partial a_1\partial a_2}\mathds{E}_{i,z}\Biggr(\exp\left\{-\theta_1\int_{0}^{T_2 }\sigma(J(s))\dd s-\theta_2\sum_{\ell: 0<S_\ell<T_2} k(J(S_\ell) -, J(S_\ell))\right\}\\
 &\qquad\qquad\qquad \times \mathds{1}\left\{\Omega_{2},\, U(T_1-)= U(T_1),\, J(T_2-)=j,\,  T_1 \le a_1,\, T_2-T_1\le a_2\right\}\Biggr)\\
 & \quad = \frac{\partial^2}{\partial a_1\partial a_2}\mathds{E}_{i,z}\Biggr(\exp\left\{-\theta_1\sigma(i)T_1\right\} \mathds{1}\left\{\Omega_{2},\, U(T_1-)= U(T_1),\, J(T_2-)=j,\,  T_1 \le a_1,\, T_2-T_1\le a_2\right\}\Biggr)\\
&\quad = \exp\{-\theta_1 \sigma(i)a_1\}\left(\CCs{+-}\left(z+a_1\right)\right)_{ij}(\gamma e^{-\gamma a_1})(\gamma e^{-\gamma a_2})\mathds{1}_{a_1,a_2\ge 0}\\
 &\quad = \gamma^2 e^{-\theta_1 \sigma(i)a_1-\gamma (a_1+a_2)}\left(\CCs{+-}\left(z+a_1\right)\right)_{ij}\mathds{1}_{a_1,a_2\ge 0}.
\end{align*}
Furthermore, on $\{ J(0)=i, U(0)=z\}\cap\{ \Omega_{2},\, U(T_1-)= U(T_1),\, J(T_2-)=j\}$ we have
 \begin{align*}U(T_2-)& = g_1(T_1,T_2-T_1)=z+T_1+(T_2-T_1),\\F(T_2)-F(0)& = g_2(T_1,T_2-T_1) = r(i)T_1+ r(j)(T_2-T_1),\\
 T_1 & = h_1(U(T_2-), F(T_2)-F(0) )= \frac{-r(j)(U(T_2-)-z) + (F(T_2)-F(0))}{r(i)-r(j)},\\
  T_2-T_1 & = h_2(U(T_2-), F(T_2)-F(0) )= \frac{r(i)(U(T_2-)-z) - (F(T_2)-F(0))}{r(i)-r(j)}.\end{align*} 
  Then, the density transformation implies that
\begin{align}
& \frac{\partial^2}{\partial a_1\partial a_2}\mathds{E}_{i,z}\Biggr(\exp\left\{-\theta_1\int_{0}^{T_2 }\sigma(J(s))\dd s-\theta_2\sum_{\ell: 0<S_\ell<T_2} k(J(S_\ell) -, J(S_\ell))\right\}\nonumber\\
 &\qquad\qquad\qquad \times \mathds{1}\left\{\Omega_{2},\, U(T_1-)= U(T_1),\, J(T_2-)=j,\,  U(T_2-) \le a_1,\, F(T_2)-F(0)\le a_2\right\}\Biggr)\nonumber\\
&\quad = \frac{\gamma^2}{r(i)-r(j)} e^{-\theta_1\sigma(i)a_1-\gamma (a_1-z)}\left(\CCs{+-}\left(z+\tfrac{-r(j)(a_1-z)+a_2}{r(i)-r(j)}\right)\right)_{ij}\mathds{1}_{-r(j)(a_1-z)+a_2\ge 0, r(i)(a_1-z)-y_2\ge 0}.\label{eq:Lambdapartial1}
\end{align}
Similarly,
\begin{align*}
& \frac{\partial^2}{\partial a_1\partial a_2}\mathds{E}_{i,z}\biggr(\exp\biggl\{-\theta_1\int_{0}^{T_2 }\sigma(J(s))\dd s-\theta_2\sum_{\ell: 0<S_\ell<T_2} k(J(S_\ell) -, J(S_\ell))\biggr\}\\
 &\qquad\qquad\qquad \times \mathds{1}\left\{\Omega_{2},\, U(T_1-)\neq U(T_1),\, J(T_2-)=j,\,  T_1 \le a_1,\, T_2-T_1\le a_2\right\}\biggr)\\
 & \quad = \frac{\partial^2}{\partial a_1\partial a_2}\mathds{E}_{i,z}\biggl(\exp\left\{-\theta_1\sigma(i)T_1-\theta_2 k(i,j)\right\} \mathds{1}\left\{\Omega_{2},\, U(T_1-)\neq U(T_1),\right.\\ 
  & \qquad \qquad \qquad \qquad \qquad \left. J(T_2-)=j,\,  T_1 \le a_1,\, T_2-T_1\le a_2\right\}\biggr)\\
&\quad = \exp\{-\theta_1 \sigma(i)a_1-\theta_2 k(i,j)\}\left(\DDs{+-}\left(z+a_1\right)\right)_{ij}(\gamma e^{-\gamma a_1})(\gamma e^{-\gamma a_2})\mathds{1}_{a_1,a_2\ge 0}\\
 &\quad = \gamma^2 e^{-\theta_1 \sigma(i)a_1-\theta_2 k(i,j)-\gamma (a_1+a_2)}\left(\DDs{+-}\left(z+a_1\right)\right)_{ij}\mathds{1}_{a_1,a_2\ge 0}.
\end{align*}
and on $\{ J(0)=i, U(0)=z\}\cap\{ \Omega_{2},\, U(T_1-)\neq U(T_1),\, J(T_2-)=j\}$ we have
 \begin{align*}U(T_2-)& = g^*_1(T_1,T_2-T_1)=z+(T_2-T_1),\\F(T_2)-F(0)& = g_2^*(T_1,T_2-T_1) = r(i)T_1+ r(j)(T_2-T_1),\\
 T_1 & = h_1^*(U(T_2-), F(T_2)-F(0) )= \frac{-r(j)(U(T_2-)-z) + (F(T_2)-F(0))}{r(i)},\\
  T_2-T_1 & = h_2^*(U(T_2-), F(T_2)-F(0) )= U(T_2-)-z.\end{align*} 
This in turn implies that
\begin{align}
& \frac{\partial^2}{\partial a_1\partial a_2}\mathds{E}_{i,z}\Biggr(\exp\left\{-\theta_1\int_{0}^{T_2 }\sigma(J(s))\dd s-\theta_2\sum_{\ell: 0<S_\ell<T_2} k(J(S_\ell) -, J(S_\ell))\right\}\nonumber\\
 &\qquad\qquad\qquad \times \mathds{1}\left\{\Omega_{2},\, U(T_1-)\neq U(T_1),\, J(T_2-)=j,\,  U(T_2-) \le a_1,\, F(T_2)-F(0)\le a_2\right\}\Biggr)\nonumber\\
&\quad = \frac{\gamma^2}{r(i)} e^{-\theta_1 \sigma(i)a_1-\theta_2 k(i,j)-\gamma \frac{(r(i)-r(j))(a_1-z) + a_2}{r(i)}}\left(\DDs{+-}\left(z+\frac{-r(j)(a_1-z) + a_2}{r(i)}\right)\right)_{ij}\nonumber \\
&\qquad  \qquad \qquad \times \mathds{1}_{-r(j)(a_1-z) + a_2\ge 0,\, a_1-z\ge 0}.\label{eq:Lambdapartial2}
\end{align}
Equation (\ref{eq:Lambdamain1}) readily follows from (\ref{eq:Lambdapartial1}) and (\ref{eq:Lambdapartial2}).

\medskip
\noindent{\bf Induction: $n\ge 3$}. For $w\in\{ 1,\dots, n-1\}$ define the disjoint events
\[E_{w}=\left\{ F(T_w) < \min_{m\in\{ 1,\dots, n-1\}\setminus\{ w\}} F(T_m)\right\},\]
which correspond to the cases where the intermediate minimum is reached at the $w$-th jump.
Clearly $ \{ E_{w}\}_{w\in\{ 1,\dots, n-1\}}$ is a partition of the sample space. Given this, (\ref{eq:Lambdamain2}) will readily follow once we show that 
\begin{align}
\left(\bm{\Gamma}^{(n,w,z)} (s,\ell)\right)_{ij}\,\dd s\,\dd \ell &= \mathds{E}_{i,z}\Biggr(\exp\left\{-\theta_1\int_{0}^{T_n }\sigma(J(s))\dd s-\theta_2\sum_{\ell: 0<S_\ell<T_n} k(J(S_\ell) -, J(S_\ell))\right\}\nonumber\\
 & \times \mathds{1}\left\{\Omega_{n}, E_w,\,J(T_{n}-)=j,\, U(T_{n}-)\in [s, s+\dd s),\, F(T_{n})-F(0)\in [\ell,\ell +\dd \ell)\right\}\Biggr),\label{eq:defGammapartial1}
\end{align}
for $\bm{\Gamma}^{(n,w,z)}(s,\ell)$ of the form (\ref{eq:Gamma1}), (\ref{eq:Gamma2}) and (\ref{eq:Gamma3}), which we do next. We proceed by assuming that $\bm{\Gamma}^{(n,w,z)}(s,\ell)$ is defined by (\ref{eq:defGammapartial1}): from this we will verify that this expression matches (\ref{eq:Gamma1}), (\ref{eq:Gamma2}) and (\ref{eq:Gamma3}) for $w\in\{ 1,\dots,n\}$.
\begin{itemize}
  \item {\bf Case $w=1$.}
  On $\{ J(0)=i, U(0)=z\}\cap\Omega_n\cap E_1$, the first jump of $J$ is to $\mathcal{S}^+$, $F(T_1)-F(0)=r(i) T_1$, and $F(T_n) < F(T_1) < \min\{ F(T_2), \cdots , F(T_{n-1})\}$; see Figure \ref{fig:Lambdanw1}.
  Thus, integrating w.r.t. the density function of $T_1$, splitting into the cases in which $U(T_1)= U(T_1-)$ and $U(T_1)\neq U(T_1-)$, and employing the strong Markov property,
  \begin{align} \left(\bm{\Gamma}^{(n,1,z)}(s,\ell)\right)_{ij} & = \sum_{i'\in\mathcal{S}}\int_0^\infty e^{-\theta_1 \sigma(i) u}\left(\gamma e^{-\gamma u}\bar{c}^{++}_{ii'}(u)\right) \left(\bm{\Lambda}^{(n-1,z+u)}(s,\ell-r(i) u\bm{e})\right)_{i'j}\dd u\nonumber\\
  & \quad+\sum_{i'\in\mathcal{S}}e^{-\theta_2 k(i,i')}\int_0^\infty e^{-\theta_1 \sigma(i)u}\left(\gamma e^{-\gamma u}\bar{d}^{++}_{ii'}(u)\right) \left(\bm{\Lambda}^{(n-1,0)}(s,\ell-r(i) u\bm{e})\right)_{i'j}\dd u,\nonumber\label{eq:AOmega1}
  \end{align}
  from which (\ref{eq:Gamma1}) follows.
  \begin{figure}[h]
  \centering
  \includegraphics[scale=1.7]{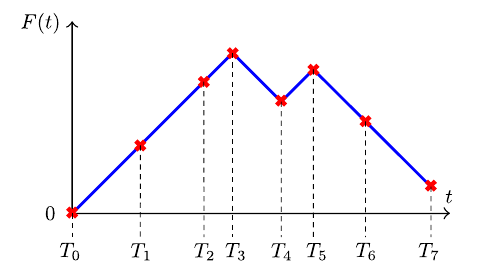}
\caption{A sample path contained in $E_{1}\cap \Omega_{n}$ for $n=7$ where each red cross represents a Poisson time $T_w$. Notice that the condition $F(T_n) < F(T_1) < \min\{ F(T_2), \cdots , F(T_{n-1})\}$ indeed holds. \label{fig:Lambdanw1}}
\end{figure}

\item {\bf Case $w\in\{ 2,\dots,n-2\}$.} On $\{ J(0)=i, U(0)=z\}\cap\Omega_n\cap E_w$, we have $F(T_0) < F(T_w) < \min\{ F(T_2), \cdots , F(T_{w-1})\}$ and $F(T_w) < F(T_n) < \min\{ F(T_{w+1}), \cdots , F(T_{n-1})\}$; see Figure \ref{fig:Lambdanwmid}. Then,
\begin{align*}
&\left(\bm{\Gamma}^{(n,w,z)}(s,\ell)\right)_{ij}= \sum_{j'\in\mathcal{S}^-}\sum_{i'\in\mathcal{S}^+} \int_0^\infty \int_{v=0\vee \ell}^\infty \left(\bm{\Lambda}^{(w,z)}(u, v)\right)_{ij'} \bar{c}^{-+}_{j'i'}(u) \left(\bm{\Lambda}^{(n-w,u)}(s, \ell-v)\right)_{i'j}\dd u \dd v\\
&\quad \quad + \sum_{j'\in\mathcal{S}^-}\sum_{i'\in\mathcal{S}^+} e^{-\theta_2 k(j',i')}\int_0^\infty \int_{v=0\vee \ell}^\infty \left(\bm{\Lambda}^{(w,z)}(u, v)\right)_{ij'} \bar{d}^{-+}_{j'i'}(u) \left(\bm{\Lambda}^{(n-w,0)}(s, \ell-v)\right)_{i'j}\dd u \dd v,
\end{align*}
and (\ref{eq:Gamma2}) follows.
 \begin{figure}[h]
  \centering
    \includegraphics[scale=1.7]{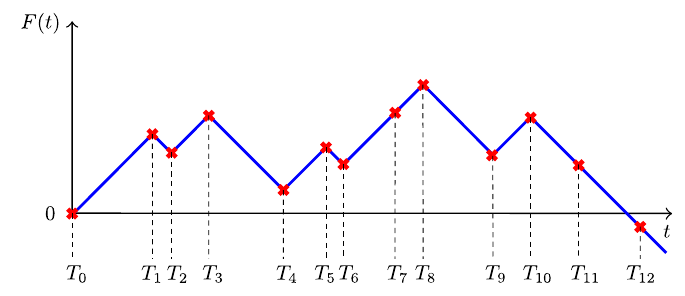}
\caption{A sample path contained in $E_{w}\cap \Omega_{n}$ for $w=4$ and $n=12$. Each red cross represents a Poisson time $T_\ell$. Notice that both conditions $F(T_0) < F(T_w) < \min\{ F(T_2), \cdots , F(T_{w-1})\}$ and $F(T_w) < F(T_n) < \min\{ F(T_{w+1}), \cdots , F(T_{n-1})\}$ hold. \label{fig:Lambdanwmid}}
\end{figure}
 \item {\bf Case $w=n-1$.} On $\{ J(0)=i, U(0)=z, J(T_n-)=j\}\cap\Omega_n\cap E_{n-1}$, we have $F(T_0) < F(T_{n-1}) < \min\{ F(T_2), \cdots , F(T_{n-2})\}$, $J(T_{n-1})\in\mathcal{S}^-$, and $F(T_n)-F(T_{n-1})=r(j) (T_n-T_{n-1})$; see Figure \ref{fig:Lambdanwfin}. Integrating w.r.t. the density function of $T_n-T_{n-1}$, splitting into the cases in which $U(T_{n-1})= U(T_{n-1}-)$ and $U(T_{n-1})\neq U(T_{n-1}-)$, and employing the strong Markov property,
  \begin{align} \left(\bm{\Gamma}^{(n,n-1,z)}(s,\ell)\right)_{ij} & = \sum_{j'\in\mathcal{S}^-}\int_0^s  \left(\bm{\Lambda}^{(n-1,z)}(s-u,\ell-r(j) u)\right)_{ij'} \left(\gamma e^{-\gamma u}\bar{c}^{--}_{j'j}(u)\right) \dd u\nonumber\\
  & \quad+\sum_{j'\in\mathcal{S}^-}e^{-\theta_2 k(j',j)}\int_0^\infty \left(\bm{\Lambda}^{(n-1,z)}(u,\ell-r(j) s)\right)_{ij'} \left(\bar{d}^{--}_{j'j}(u) \gamma e^{-\gamma s}\right)\dd u,\nonumber
  \end{align}
so that (\ref{eq:Gamma3}) follows and the proof is concluded.
\begin{figure}[h]
  \centering
  \includegraphics[scale=1.7]{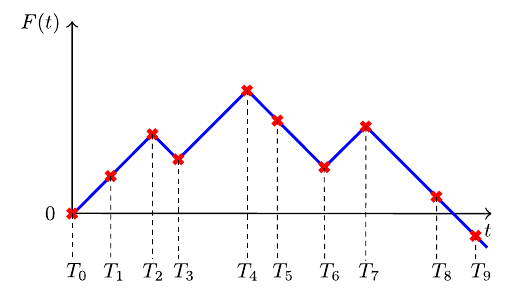}
\caption{A sample path contained in $E_{n-1}\cap \Omega_{n}$ for $n=9$. Each red cross represents a Poisson time $T_w$. Note that $F(T_0) < F(T_{n-1}) < \min\{ F(T_2), \cdots , F(T_{n-2})\}$. \label{fig:Lambdanwfin}}
\end{figure}
  \end{itemize}

  \subsection{Proof of Theorem~\ref{th:timeinhomogeneouspsi1}}\label{sec:th:timeinhomogeneouspsi1}

Fix $t>0$ and let 
\[B_n^*=\Omega_n\cap \{ T_n \le t,\, F(T_n) - F(0)\le 0\}.\]
Then, similarly to (\ref{eq:auxBn5}), we can argue that $\{ B^*_n\}_{n\ge 2}$ is a partition of $\{ \tau_*\le t\}$. Since $U(T_n-)=T_n + z$, then (\ref{eq:Psistar3}) follows. Moreover,
\begin{align*}
\sum_{n=m}^\infty \big(\bm{L}^{(n,z)}(0,0,t+z, 0)\big)_{ij}&\le \mathds{P}\big(T_m\le t\big)\\
&=O\left((\gamma t)^{m}/m!\right)\\
& = O\left(e^{m\log(\gamma t) - (m+1/2)\log m + m}\right)\\
& = o\left(e^{-m(\log m - \log(\gamma t)-1)}\right),
\end{align*}
where the second equality follows from  \cite[Corollary 1.(ii)]{glynn1987upper} and the third equality follows from Stirling's formula; see e.g., \cite[p. 52]{feller1968introduction}.

\section{Alternative construction of a DMArP}\label{sec:altconst}

We devote this section to constructing the DMArP $N$, as well as its underlying jump process $J$, without the uniform boundedness condition of Assumption \ref{ass:bounded1}. Our construction employs the theory of \emph{piecewise deterministic Markov processes} (PDMPs), introduced in \cite{davis1984piecewise}, for which we first provide a brief summary.

A piecewise deterministic Markov process $X=\{ X(t)\}_{t\ge 0}$ is a stochastic process whose state-space $\mathcal{E}$ is a union of subsets $\mathcal{E}_j$ of Euclidean spaces, and whose evolution is characterized by three components:
\begin{itemize}
\item a \emph{vector field} $\mathfrak{X}$ acting on $\mathcal{E}$,
\item a \emph{jump rate} $\lambda:\mathcal{E}\rightarrow \mathds{R}_+$, 
\item a \emph{transition probability measure} $Q:\mathds{B}_{\mathcal{E}}\times\mathcal{E}\rightarrow \mathds{R}_+$, where $\mathcal{B}_{\mathcal{E}}$ is shorthand for the union of the Borel $\sigma$-algebras associated to each $\mathcal{E}_j$.
\end{itemize}

The triple $(\mathfrak{X}, \lambda, Q)$ is referred to as the \emph{local characteristics} of the PDMP and dictate the behavior of $X$ as follows. Here, we will use PDMPs such that the solution path of the vector field $\mathfrak{X}$ is contained in $\mathcal{E}$ for any initial point $x_0\in\mathcal{E}$, or in other words, $\mathfrak{X}$ does not point towards boundary points of $\mathcal{E}$. Suppose that $X(0)\in \mathcal{E}$ is arbitrary but fixed; then the process $X$ moves deterministically in $\mathcal{E}$ according to $\mathfrak{X}$ up to its first jump epoch, the latter which occurs at time $t\ge 0$ with intensity $\lambda \big(X({t-})\big)$. Say that the first jump occurs at time $\beta_1>0$; then, $X(\beta_1)$ jumps to some state in $\mathcal{E}$ according to the probability measure $Q\big(X(\beta_1-),\cdot \big)$, from which we repeat the aforementioned scheme. By recursively replicating these steps, we arrive at a sequence of jump times $0 < \beta_1<\beta_2<\beta_3<\dots$ and points $X(\beta_{1}), X(\beta_{2}), X(\beta_{3})$ which define the process $X$ up to time $\beta_*:=\lim_{n\rightarrow\infty} \beta_n$. It can be shown \cite{davis2018markov} that the PDMP $X$ as constructed above is a strong Markov process characterized by the infinitesimal generator

\begin{equation}\label{eq:generatorMarkov1}\mathscr{L} f (x) = \mathfrak{X}f(x) + \lambda(x) \int_{\mathcal{E}} (f(y)-f(x))\,Q(x;\dd y).\end{equation}

For our DMArP construction, we let $\mathcal{E}=\mathds{R}_+\times\mathcal{S}$ where $\mathcal{S}$. Moreover, we take the local characteristics $(\mathfrak{X},\lambda,Q)$:
\begin{enumerate}
\item[(I.1)]\label{it:local1} $\mathfrak{X}f(v,i)=\frac{\partial f}{\partial v}(v,i)$ for $i\in\mathcal{S},\, v \ge 0$ and $ f\in C^{1}( \mathds{R}_+\times\mathcal{S})$, where the partial derivative at $v=0$ is understood as a partial derivative from the right, and $ C^{1}(\mathds{R}_+\times\mathcal{S})$ is the space of functions on $\mathds{R}_+\times\mathcal{S}$ with a continuous partial derivative on their first entry.
\item[(I.2)]\label{it:local2} $\lambda(v,i)= c_{i}(v)$.
\item[(I.3)]\label{it:local3} For each $v\ge 0$, and $i,j\in\mathcal{S}$, $u\ge 0$,
\[Q(v,i; \dd u,\dd j)= \left\{  \begin{array}{ccc} \frac{c_{ij}(v)}{c_i(v)} & \mbox{if} & i \neq j, u=v, c_i(v)>0\\ \frac{d_{ij}(v)}{c_i(v)} & \mbox{if} & n u=0, c_i(v)>0\\
\mathds{1}{\{i=j, v=u\}} & \mbox{if} & c_i(v)=0.\end{array}\right.\]
\end{enumerate}

A PDMP $X$ with local characteristics (I.1), (I.2), and (I.3) has the following behavior. While in $\mathds{R}_+\times {i}$, it jumps at a rate of $c_i(\cdot)$ at times $\beta_1, \beta_2, \dots$. Suppose that $X(\beta_{n})=(v,i)$ for some $v\ge 0$ and $i\in\mathcal{S}$. Then, $X(\beta_{n}+s)=(v+s, i)$ for all $s\in [0, \beta_{n+1}-\beta_{n})$. Also, at time $\beta_n+s'$, where $s'=\beta_{n+1}-\beta_n$, $X(\beta_n+s')$ can be either $(v+s', j)$ with probability $c_{ij}(v+s')/c_i(v+s')$ or $(0, j)$ with probability $d_{ij}(v+s')/c_i(v+s')$. Multiplying these probabilities by the jump intensity $c_i(v+s')$, we find that after $\beta_n$ with $X(\beta_{n})=(v,i)$, the next jump time $\beta_{n+1}$ of the second coordinate process of $X$ occurs $s'$ units of time later according to the intensity matrix $\bm{C}(v+s')+\bm{D}(v+s')$. At the same time, the first coordinate process jumps to $0$ at this epoch only due to the mass contribution from $\bm{D}(v+s')$ (otherwise it does not jump). Consequently, if we let $J$ be the second coordinate process of $X$ and $N$ be the counting process associated with the visits of the first coordinate process to ${0}\times\mathcal{S}$ in the time interval $(0,\infty)$, then we can see that the duration process $U$ coincides with the first coordinate process of $X$. Furthermore, this construction agrees with the distributional properties (\ref{eq:defMArPR1}) and (\ref{eq:defMArPR2}) that characterize a DMArP.

According to (\ref{eq:generatorMarkov1}), the infinitesimal generator $\mathscr{L}$ of the PDMP with local characteristics (I.1), (I.2) and (I.3) is given by
\begin{equation*}\mathscr{L}f(v,i)=\frac{\partial f}{\partial v} (v,i) + \sum_{j\in \mathcal{S}} [f(v,i)-f(v,i)]\, C_{ij}(v) + \sum_{j\in \mathcal{S}} [f(v,j)-f(v,i)]\, D_{ij}(s),\quad i\in\mathcal{S},\,s\in\mathds{R}_+,\end{equation*}
which ultimately corresponds to that of the bivariate process $(U,J)$. Note that this construction defines the processes $J$, $U$, and $N$ up to $\beta_*$. Below we present a condition that guarantees that $\beta_*=+\infty$, and thus, these processes are defined in $\mathds{R}_+$.

\begin{theorem}
Suppose that $\int_0^t c_i(a)\dd a<\infty$ for all $i\in\mathcal{S}, t\ge 0$. Then, $\beta_*=\infty$ almost surely.
\end{theorem}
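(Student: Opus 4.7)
The plan is to fix an arbitrary $T > 0$ and show that $\beta_* > T$ almost surely; letting $T \uparrow \infty$ then yields $\beta_* = \infty$ a.s. I would split the jumps of the PDMP $(U, J)$ into $\bm{D}$-type jumps (which coincide with the arrivals $S_n$ of $N$ and reset $U$ to zero) and $\bm{C}$-type jumps (which only affect $J$), and argue that neither type can accumulate in $[0, T]$.

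For the $\bm{D}$-type jumps, set $L_n := S_{n+1} - S_n$. Conditionally on $\mathscr{F}_{S_n}$, $L_n$ is the first epoch of a time-inhomogeneous Poisson process whose intensity at duration $u$ is $\sum_j d_{J(S_n + u), j}(u)$, which is pointwise bounded above by $c_\ast(u) := \max_{i \in \mathcal{S}} c_i(u)$. Hence
\[
\mathds{P}(L_n > s \mid \mathscr{F}_{S_n}) \;\ge\; G(s) := \exp\left(-\int_0^s c_\ast(u)\,\dd u\right),
\]
and $G$ is strictly positive on $[0, \infty)$ since $c_\ast$ is finitely integrable on every compact interval (indeed $\int_0^s c_\ast\,\dd u \le \sum_{i\in\mathcal{S}} \int_0^s c_i(u)\,\dd u < \infty$ by hypothesis and finiteness of $\mathcal{S}$). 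A sequential quantile-transform coupling on an enlarged probability space then produces i.i.d.\ strictly positive random variables $\tilde L_1, \tilde L_2, \dots$ with common survival $G$ such that $L_n \ge \tilde L_n$ pathwise whenever $L_n$ is defined. Since $\sum_{k=1}^n \tilde L_k \to \infty$ a.s.\ (sum of i.i.d.\ positive variables), I conclude $S_n \to \infty$ a.s., so only finitely many $\bm{D}$-jumps occur in $[0, T]$.

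For the $\bm{C}$-type jumps, consider any single excursion $[S_n, S_{n+1})$. The $\bm{C}$-jump intensity at calendar time $S_n + u$ equals $\sum_{j \neq J(S_n + u)} c_{J(S_n + u), j}(u) \le c_\ast(u)$, so a textbook Poisson-thinning construction dominates the number of $\bm{C}$-jumps occurring during duration $[0, L_n \wedge T]$ by a Poisson random variable with finite mean $\int_0^T c_\ast(u)\,\dd u$, hence almost surely finite. Combining this with the previous step, the total number of jumps of $(U, J)$ in $[0, T]$ is a finite sum of a.s.\ finite terms, so $\beta_* > T$ almost surely; letting $T \uparrow \infty$ completes the proof.

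The main subtlety lies in the $\bm{D}$-jump step: because the conditional distribution of each $L_n$ depends in a complicated way on the hidden dynamics of $J$ between arrivals, a pointwise lower bound on the conditional survival function that is \emph{uniform in the past} (which $c_\ast$ conveniently provides) is the key input that enables the sequential coupling to an i.i.d.\ dominating sequence; once this coupling is in place, the divergence of partial sums of strictly positive i.i.d.\ variables closes the argument.
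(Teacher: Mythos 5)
Your proof is correct, but it takes a genuinely different route from the paper's. The paper does not separate the two jump types at all: it argues by contradiction directly on the full jump sequence $\beta_1<\beta_2<\cdots$ of the PDMP, noting that on $\{\beta_*\le T\}$ every post-jump duration is at most $T+v_0$, so each holding time exceeds the remaining horizon with probability at least $\exp\bigl(-\sum_{j\in\mathcal{S}}\int_0^{2T+v_0}c_j(a)\,\dd a\bigr)>0$; this yields $\mathds{P}(\beta_\ell\le T)\le A^\ell\rightarrow 0$ with a single uniform constant $A<1$. You instead split the jumps into the $\bm{D}$-skeleton $\{S_n\}$, controlled by coupling the interarrival times to an i.i.d.\ renewal sequence with survival $G$, and the $\bm{C}$-jumps within each excursion, controlled by thinning against an inhomogeneous Poisson process of intensity $c_*$. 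Both proofs use the hypothesis in exactly the same way --- local integrability of the $c_i$ makes the survival probability of a holding time uniformly positive as long as the duration stays bounded --- but the paper's version is shorter and needs no coupling machinery, whereas yours yields strictly more information: $S_n\rightarrow\infty$ a.s.\ and an explicit Poisson bound on the number of environmental switches per excursion. Two points to make explicit in a full write-up. First, the bound $\mathds{P}(L_n>s\mid\mathscr{F}_{S_n})\ge G(s)$ presupposes that the excursion does not explode via $\bm{C}$-jumps before duration $s$, so the Poisson-domination step should logically precede (or be run jointly with) the renewal coupling; as written there is a mild circularity, easily repaired by first ruling out accumulation of jumps within a single excursion at any finite duration and then treating the two terminal alternatives (infinitely many $\bm{D}$-jumps versus a final infinite excursion) separately. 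Second, the initial excursion starts from duration $v_0=U(0)$ rather than $0$, which shifts the first survival bound to $\exp\bigl(-\int_{v_0}^{v_0+s}c_*(u)\,\dd u\bigr)$ --- still positive, so nothing breaks, but the dominating sequence is then i.i.d.\ only from $n=1$ onward.
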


\begin{proof}
For $X(0)=(v_0,i_0)$, assume that there exists some $T>0$ such that the event $\{\beta_*< T\}=\{\beta_\ell \le T \mbox{ for all }\ell \ge 1\}$ occurs with strictly  positive probability. By construction, on the event $\{\beta_\ell\le T\}$, the first coordinate of $X(\beta_\ell)$ cannot be larger than $T+v_0$. Then, for $b_\ell\le T$, $v_\ell\le T+v_0$ and $i_\ell\in\mathcal{S}$,
\begin{align}
\mathds{P}(\beta_{\ell+1}\le T | \beta_\ell= b_\ell, X(\beta_\ell)=(v_\ell,i_\ell)) &  = 1-e^{-\int_{\beta_\ell}^{T} c_{i_\ell}(a+v_\ell)\dd a} = 1-e^{-\int_{\beta_\ell+v_\ell}^{T+v_\ell} c_{i_\ell}(a)\dd a}\nonumber\\
& \le 1 - \exp\left(-\sum_{j\in\mathcal{S}}\int_{0}^{2T+v_0} c_{j}(a)\dd a\right)=:A.
\end{align}
Note that $A$ does not depend on $b_\ell, v_\ell$ or $i_\ell$, so that we can write
\[\mathds{P}(\beta_{\ell+1}\le T | \beta_\ell\le T)\le A.\]
Furthermore, since $\int_0^t c_i(a)\dd a<\infty$ for all $t\ge 0$, then $A<1$. Consequently, by recursive conditioning we get
\[\mathds{P}(\beta_\ell \le T \mbox{ for all }\ell \ge 1)= \lim_{\ell \rightarrow\infty} \mathds{P}(\beta_\ell \le T) \le \lim_{\ell \rightarrow\infty} A^{\ell} = 0, \]
which contradicts the initial assumption. Thus, $\beta_*=\infty$ holds almost surely.
\end{proof}


\end{document}